\newlength{\defbaselineskip}
\theoremstyle{plain}
\newtheorem{theorem}{Theorem}[section]
\newtheorem{conjecture}[theorem]{Conjecture}
\newtheorem{corollary}[theorem]{Corollary}
\newtheorem{lemma}[theorem]{Lemma}
\newtheorem{prop}[theorem]{Proposition}
\theoremstyle{definition}
\newtheorem{definition}{Definition}[section]
\theoremstyle{remark}
\numberwithin{equation}{section}
\begin{document}
\title{On the inductive blockwise Alperin weight condition for the unipotent blocks of finite groups of Lie type $E_6$}
\author{Yucong Du, Pengcheng Li, Shuyang Zhao \\School of Mathematical Sciences, Peking University, \\Beijing 100871, China}
\date{}
\maketitle
\let\thefootnote\relax\footnotetext{Supported by NSFC (No. 11631001).}

\let\thefootnote\relax\footnotetext{\emph{Email addresses}: \leftline{duke@pku.edu.cn (Y. Du), pcli17@pku.edu.cn (P. Li), zsy0509@pku.edu.cn (S. Zhao).}}
\begin{abstract}
In this article, we consider the finite exceptional groups of Lie type $E_6$ and $^2E_6$. We prove the inductive blockwise Alperin weight condition holds for unipotent $l$-blocks of $E_6^{\varepsilon}(q)$ if $2,3\nmid q$, $l\geq 5$.
\end{abstract}

\noindent\textbf{2010 Mathematics Subject Classification:} 20C20, 20C33.

\noindent\textbf{Keywords:} Alperin weight conjecture; inductive blockwise Alperin weight condition; finite groups of Lie type $E_6$; unipotent blocks.
\section{Introduction}
Let $G$ be a finite group and $l$ a prime. For an $l$-subgroup $R$ of $G$ and a character
$\varphi\in\mathrm{Irr}(N_G(R))$, the pair $(R,\varphi)$ is called an $l$-weight of $G$ if $R\leq \ker(\varphi)$ and $\varphi$ is of $l$-defect zero when viewed as a character of $N_G(R)/R$. Let $B$ be an $l$-block of $G$. An $l$-weight $(R,\varphi)$ is called a $B$-weight if $\mathrm{bl}(\varphi)^G = B$, where $\mathrm{bl}(\varphi)$ means the $l$-block of $N_G(R)$ containing $\varphi$. Note that $R$ must be $l$-radical for any $l$-weight $(R,\varphi)$, that is, $O_l(N_G(R))=R.$ Denote by $\mathcal{W}(B)$ the set of all $G$-conjugacy classes of $B$-weights of $G$. Then the blockwise Alperin weight conjecture
can be stated as follows:
\begin{conjecture}(\cite{conj})
 Let $G$ be a finite group, $l$ a prime and $B$ an $l$-block of $G$. Then
$$|\mathcal{W}(B)|=|\mathrm{IBr}(B)|.$$
\end{conjecture}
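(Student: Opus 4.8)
The plan is to prove the conjecture not by a direct uniform argument, but by the now-standard reduction-to-simple-groups strategy. By the reduction theorem of Navarro and Tiep (for the non-blockwise form) and its blockwise refinement due to Späth, the blockwise Alperin weight conjecture holds for every finite group $G$, every prime $l$ and every $l$-block $B$, provided that the \emph{inductive blockwise Alperin weight} (iBAW) condition holds for every finite simple group $S$ with respect to every prime. Thus I would split the work into two stages: first, establish (or invoke) this reduction theorem, so that the universally quantified statement $|\mathcal{W}(B)|=|\mathrm{IBr}(B)|$ is equivalent to a statement purely about simple groups; second, verify the iBAW condition for each finite simple group, using the classification of finite simple groups (CFSG).

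For the reduction stage, one argues by induction on $|G|$, examining a minimal counterexample $(G,B)$. Block-theoretic machinery—Fong–Reynolds reductions, passage to normal subgroups, and the theory of covering blocks—lets one descend to the situation where the relevant simple subquotient $S=G/Z(G)$ controls the block. The content of the iBAW condition is precisely what is needed to compare $\mathrm{IBr}(B)$ with $\mathcal{W}(B)$ in an $\mathrm{Aut}(S)$-equivariant and centrally compatible way: it postulates a bijection $\Omega$ between the irreducible Brauer characters and the $l$-weights of the universal covering group $\widetilde{S}$ that preserves blocks and central characters, is equivariant under $\mathrm{Out}(\widetilde{S})$, and satisfies a cohomological (extendibility) condition guaranteeing that $\Omega$ lifts to an isomorphism of the associated character triples. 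Feeding this into the minimal-counterexample analysis forces $|\mathcal{W}(B)|=|\mathrm{IBr}(B)|$, contradicting minimality, and so completes the reduction.

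The second stage runs over the CFSG list—cyclic groups, alternating groups, the sporadic groups together with the Tits group, and the groups of Lie type—verifying the iBAW condition in each case by exhibiting the equivariant bijection and checking the Clifford-theoretic condition. The groups of Lie type are overwhelmingly the hardest: for these one must control the full $l$-modular representation theory (decomposition matrices, Jordan decomposition of characters, the distribution of $\mathrm{IBr}(S)$ into blocks) and simultaneously classify the radical $l$-subgroups and their local normalizer structure so as to enumerate weights, all while tracking the action of diagonal, field, and graph automorphisms on both sides of the bijection. This case-by-case verification over the Lie type families is the main obstacle, and it is exactly here that the present article contributes a missing piece: it establishes the iBAW condition for the unipotent $l$-blocks of $E_6^{\varepsilon}(q)$ under the hypotheses $2,3\nmid q$ and $l\ge 5$. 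The conjecture in full generality then follows once every such entry in the program has been supplied; the present work advances that program by one family.
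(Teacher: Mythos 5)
The statement you were asked to prove is Conjecture 1.1, the blockwise Alperin weight conjecture itself. The paper contains no proof of it and offers none: it is quoted from Alperin's original article and is still open. What the paper actually establishes is far more modest, namely the iBAW condition for the unipotent $l$-blocks of $E_6^{\varepsilon}(q)$ with $l\geq 5$, under a unitriangularity hypothesis on the decomposition matrix (unconditional when $2,3\nmid q$). So there is no ``paper's proof'' against which your attempt could be measured, and, more importantly, your attempt is not a proof.

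Your stage one, the reduction, is legitimate and is exactly what the paper invokes: Sp\"ath's reduction theorem (the paper's reference \cite{red}) shows that the conjecture holds for a finite group $G$ provided every non-abelian simple group involved in $G$ satisfies the iBAW condition. The genuine gap is your stage two: the verification of the iBAW condition for \emph{all} finite simple groups and \emph{all} primes has not been carried out, by anyone, and you do not (and cannot, within this proposal) supply it. Your own closing sentence concedes this --- ``the conjecture in full generality then follows once every such entry in the program has been supplied'' --- which converts your text from a proof into a survey of an open program. The known cases, listed in the paper's introduction, are scattered: cyclic blocks, many sporadic groups, alternating groups and $^2F_4$, the defining characteristic together with $^2B_2$ and $^2G_2$, $G_2$ and $^3D_4$, partial results in types $A$ and $C$, unipotent blocks of classical groups, and now (conditionally) unipotent blocks of $E_6^{\varepsilon}(q)$. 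Even for the family treated in this very paper the verification is restricted to unipotent blocks, to $l\geq 5$, and to $2,3\nmid q$. Until every simple group and every prime is settled, the minimal-counterexample machinery in your stage one has nothing to feed on, and the statement remains exactly what the paper calls it: a conjecture.
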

This conjecture has been reduced to simple groups, that is, this conjecture holds for a finite group $G$ if all non-abelian simple groups involved in $G$ satisfy the so-called inductive blockwise Alperin weight (iBAW) condition. See Section \ref{IBAW} below.

By now, the (iBAW) condition has been verified for some cases. For example, cyclic blocks, see \cite{Cyc}; many of the sporadic simple groups, see \cite{spor}; simple alternating groups, simple groups of type $^2F_4$, see \cite{abe}; simple groups of Lie type in the defining characteristic, simple groups of type $^2B_2$ and $^2G_2$, see \cite{red}; simple groups of type $G_2$ and $^3D_4$, see \cite{GD}; some cases of type A, see \cite{PSL}, \cite{PSU} and \cite{blockA}; the case of type C under the assumption that the decomposition matrix with respect to $\mathcal{E}(G,l')$ is unitriangular, see \cite{C2} and \cite{Codd}; unipotent blocks of classical groups, see \cite{clas}.

In this article, we consider finite exceptional types of Lie type $E_6$ and $^2E_6$ based on the classification of their radical subgroups in \cite{RadE6}. We write $E_6^{+1}(q)=E_6(q)$ and $E_6^{-1}(q)=$$^2E_6(q)$ for the universal versions.

Our main theorem can be stated as follows:
\begin{theorem}
Let $G$ be the finite exceptional groups of Lie type $E_6^{\varepsilon}(q)$ of universal version, where $\varepsilon\in\{\pm 1\}$. Let $l\geq 5$ be a prime with $l\nmid q$. Let $B$ be a unipotent block of $G$. If the sub-matrix of the decomposition matrix of $B$ with respect to the basic set $\mathcal{E}(G,1)\cap B$ is unitriangular, then the inductive blockwise Alperin weight condition holds for $B$.

In particular, if $2,3\nmid q$, then the inductive blockwise Alperin weight condition holds for every unipotent $l$-block of $G$.
\end{theorem}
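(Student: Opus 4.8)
The plan is to verify the (iBAW) condition in the form given by Sp\"ath's reduction theorem: writing $G = E_6^\varepsilon(q)_{sc}$ for the simply connected (universal) group and $S = G/Z(G)$, one must produce for the unipotent block $B$ an $\mathrm{Aut}(G)_B$-equivariant bijection $\Omega_B \colon \mathrm{IBr}(B) \to \mathcal{W}(B)$ that preserves central characters and for which each $\phi \in \mathrm{IBr}(B)$ and its image $\Omega_B(\phi)$ have the same stabiliser in $\mathrm{Aut}(G)_B$ and give block-isomorphic character triples (the relation $\sim_b$ of Sp\"ath). Since $l \geq 5$ is a good prime for $E_6$ and $l \nmid q$, the unipotent $l$-blocks are governed by $e$-Harish--Chandra theory with $e = e_l(q)$ the multiplicative order of $q$ modulo $l$: by Brou\'e--Malle--Michel and Cabanes--Enguehard, $B$ is labelled by an $e$-cuspidal unipotent pair $(\mathbf{L},\lambda)$, and $\mathcal{E}(G,1)\cap B$ is in bijection with $\mathrm{Irr}(W_G(\mathbf{L},\lambda))$ for the relative Weyl group of that pair. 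The overall strategy follows the template already used for classical groups in \cite{clas} and for $G_2,{}^3D_4$ in \cite{GD}.

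First I would pin down $\mathrm{IBr}(B)$. The unitriangularity hypothesis says that $\mathcal{E}(G,1)\cap B$ is a basic set of $B$ whose decomposition submatrix is lower unitriangular; hence the decomposition map restricts to a canonical bijection $\mathrm{IBr}(B) \leftrightarrow \mathcal{E}(G,1)\cap B$, and this bijection is automatically $\mathrm{Aut}(G)_B$-equivariant because $\mathcal{E}(G,1)$ is permuted by $\mathrm{Aut}(G)$ and the decomposition map commutes with that action. This transports the combinatorics of $\mathrm{Irr}(W_G(\mathbf{L},\lambda))$ to the left-hand side. Next I would enumerate $\mathcal{W}(B)$ using the classification of radical $l$-subgroups of $E_6^\varepsilon(q)$ in \cite{RadE6}: for each $G$-class of radical $l$-subgroup $R$, determine $N_G(R)$ and the central-product/Levi structure of $N_G(R)/R$, and count the $l$-defect-zero characters of $N_G(R)/R$ lying in blocks $b$ with Brauer correspondent $b^{G}=B$. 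As in \cite{clas} and \cite{GD}, the radical subgroups contributing to a unipotent block arise from the $e$-split Levi structure, and the $B$-weights $(R,\varphi)$ attached to a given $R$ are parametrised by defect-zero characters of a relative Weyl group, matching the $\mathrm{Irr}(W_G(\mathbf{L},\lambda))$ data on the Brauer side. Aligning the two parametrisations through this common relative-Weyl-group description produces $\Omega_B$ and simultaneously shows it preserves central characters, both sides lying over the character of $Z(G)$ determined by $(\mathbf{L},\lambda)$.

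The principal difficulty is not the counting but the verification of the equivariance and cohomological clauses. For equivariance I would track the action of $\mathrm{Aut}(S)$, generated by the diagonal automorphisms (a cyclic group of order $\gcd(3,q-\varepsilon)$), the field automorphisms, and, when $\varepsilon=+1$, the graph automorphism $\gamma$ of order $2$; since $l \geq 5$, the prime $l$ divides no part of $\mathrm{Out}(S)$ except possibly the field-automorphism group, which constrains the stabilisers. The genuinely delicate step is to show, for corresponding $\phi$ and $(R,\varphi)$, that their stabilisers in $\mathrm{Aut}(G)_B$ coincide and that the character triples are block-isomorphic: this amounts to proving maximal extendibility of the relevant characters to their inertia subgroups in $G \rtimes \mathrm{Aut}(G)_B$ and matching the associated $2$-cocycles. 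I expect this to be hardest for $e \in \{1,2\}$, where the radical subgroups and their normalisers are largest and the graph automorphism genuinely permutes the $e$-cuspidal data, so that the equivariant bijection and the extendibility have to be arranged compatibly at once.

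Finally, for the unconditional statement I would remove the unitriangularity hypothesis by invoking the known unitriangular shape of the unipotent decomposition matrices of $E_6^\varepsilon(q)$ for good primes $l \geq 5$ when $2,3 \nmid q$ (Brunat--Dudas--Taylor together with the type-specific computations). Feeding this into the conditional statement yields the (iBAW) condition for every unipotent $l$-block and completes the proof.
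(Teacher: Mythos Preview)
Your outline is close to the paper's approach and would go through for most of the cases, but there is a genuine gap in one regime. The paper splits into $l\nmid q-\varepsilon$ (where the Sylow $l$-subgroup is abelian) and $l\mid q-\varepsilon$. In the first case Malle's results for abelian Sylow subgroups give exactly the bijection you describe between radical $l$-subgroups and $e$-split Levi subgroups, and your parametrisation of $B$-weights by $\mathrm{Irr}(W_{G}(\mathbf{L},\lambda))$ works as stated.

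The gap is at $l=5$ with $5\mid q-\varepsilon$. Here $5$ divides $|W(E_6)|=|\mathrm{PSp}_4(3).2|=2^7\cdot 3^4\cdot 5$, the Sylow $5$-subgroups of $G$ are non-abelian, and your claim that ``the radical subgroups contributing to a unipotent block arise from the $e$-split Levi structure'' fails. Concretely, the relative Weyl group for the principal block is $W(E_6)$, which has only $15$ characters of $5$-defect zero, so the maximal torus radical $R_{16}$ supplies only $15$ of the $25$ required principal-block weights. The missing $10$ weights come from two non-abelian radical $5$-subgroups $R_{19}$ and $R_{21}$ in the An--Dietrich--Huang classification, contributing $8$ and $2$ weights via $N_G(R_i)/R_iC_G(R_i)\cong 4\times 2$ and $\mathrm{SL}_2(5)\times 2$ respectively; these are not visible through the $e$-split Levi picture at all. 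The paper treats them by a direct block-induction argument through $C_G(R_5)\cong (q-\varepsilon)\times\mathrm{GL}_5^\varepsilon(q)$, and then checks condition~(3) of Definition~\ref{ibaw} for each of $R_6,R_{16},R_{19},R_{21}$ separately by exhibiting explicit extensions to $N_{\mathrm{Aut}(G)}(\bar R)$. Your plan as written would undercount the weights here and would not yield the bijection.

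A secondary point: the difficulty you anticipate with equivariance---the graph automorphism ``genuinely permuting the $e$-cuspidal data''---does not in fact arise. The paper shows, using Malle's result that every unipotent character of $G$ is $\mathrm{Aut}(G)$-invariant, together with a case check that every unipotent weight is $\mathrm{Aut}(G)$-invariant up to conjugacy, that the equivariant bijection is automatic once the counts agree. No cocycle matching or $\sim_b$ relation is needed; the substantive work in condition~(3) is the concrete extension of each weight character to its stabiliser in $\mathrm{Aut}(G)$, carried out radical subgroup by radical subgroup.
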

\section{Notations and preliminary results}
\subsection{Notations}
Let $G$ be a finite group and $l$ a prime. The set of all irreducible ordinary or $l$-Brauer characters of a finite group $G$ is denoted by $\mathrm{Irr}(G)$ or $\mathrm{IBr}(G)$ respectively. The set of all irreducible ordinary or $l$-Brauer characters belonging to a block $B$ is denoted by $\mathrm{Irr}(B)$ or $\mathrm{IBr}(B)$. For $\chi\in\mathrm{Irr}(G)\cup\mathrm{IBr}(G)$, we denote by $\mathrm{bl}(\chi)$ the block that $\chi$ belongs to. The set of $l$-blocks of $G$ is denoted by $\mathrm{bl}(G).$ We denote by $\mathrm{dz}(G)$ the set of $l$-defect zero irreducible
characters of $G$. If $Q$ is a radical $l$-subgroup of $G$ and $B$ an $l$-block of $G$, then we define the set
$\mathrm{dz}(N_G(Q)/Q, B) := \{\chi\in\mathrm{dz}(N_G(Q)/Q)\mid\mathrm{bl}(\chi)^G = B\}$. If $H$ is a subgroup of  $G$ and if $\chi$ and $\nu$ are characters of $G$ and $H$ respectively, then we denote by $\chi_H$ and $\nu^G$ the restriction of $\chi$ to $H$ and the induction of $\nu$ to $G$ respectively. By $\mathrm{Irr}(G\mid\nu)$ we denote the set of irreducible constituents of $\nu^G$. If $H$ is normal in $G$ we write $\mathrm{Irr}(H\mid\chi)$ for the set of constituents of $\chi_H$. Let $\chi\in \mathrm{Irr}(G)$. We denote by $\chi^\circ$ the restriction of $\chi$ to the set of all $l'$-elements of $G$. Let $Y \subseteq \mathrm{IBr}(G)$. A subset $X\subseteq \mathrm{Irr}(G)$ is called a basic set of $Y$ if $\{\chi^\circ\mid\chi\in X\}$ is a $\mathbb{Z}$-basis of $\mathbb{Z}Y$. If $Y=\mathrm{IBr}(B)$ for some $l$-block $B$ of $G$, then we also say $X$ a basic set of $B$.

If a group $A$ acts on a finite set $X$, we denote by $A_x$ the stabilizer of $x\in X$ in $A$, analogously we denote by $A_{X'}$ the setwise stabilizer of $X'\subseteq X$. If $A$ acts on a finite group $G$ by automorphisms, then there is a natural action of $A$ on $\mathrm{Irr}(G)\cup \mathrm{IBr}(G)$ given by $^{a^{-1}}\chi(g) = \chi^a(g) = \chi(g^{a^{-1}})$ for every $g\in G, a \in A $ and $\chi\in\mathrm{Irr}(G)\cup\mathrm{IBr}(G)$.

A subgroup $R\leq G$ is called $l$-radical if $R = O_l(N_G(R))$. We denote by $\mathrm{Rad}(G)$ the set of $l$-radical subgroups of G. Furthermore, $\mathrm{Rad}(G)/ \sim_G$ denotes a $G$-transversal of radical $l$-subgroup of $G$.

Following the notation in \cite{RadE6}, for finite groups $A,B$, we write $A.B$ for an extension of $A$ by $B$. If $n,m$ are positive integers, we write $n^m$ for the direct product of $m$ copies of cyclic groups of order $n$. For $\eta\in\{\pm 1\}$, we define $n_\eta=(n,q-\eta)$. If $H_1,H_2\leq G$ and $Z\leq Z(H_1)\cap Z(H_2)$, we denote by $H_1\circ_Z H_2$ the central extension of $H_1$ and $H_2$ over $Z$; we also write $H_1\circ H_2=H_1\circ_{Z(H_1)\cap Z(H_2)}H_2.$
\subsection{The inductive blockwise Alperin weight condition}\label{IBAW}
There are several versions of the iBAW condition. Apart from the original version given in
\cite[Defition 4.1]{red}, there is also a version treating only blocks with defect groups involved in certain
sets of $l$-groups \cite[Defition 5.17]{red}, or a version handling single blocks in \cite[Definition 3.2]{Cyc}. We shall
consider the inductive condition for a single block here.
\begin{definition}(\cite[Definition 3.2]{Cyc})\label{ibaw}
Let $l$ be a prime, $S$ a finite non-abelian simple group and $X$
the universal $l'$-covering group of $S$. Let $B$ be an $l$-block of $X$. We say the inductive blockwise
Alperin weight (iBAW) condition holds for $B$ if the following statements hold:
\begin{enumerate}
  \item[(1)] There exist subsets $\mathrm{IBr}(B\mid R) \subseteq \mathrm{IBr}(B)$ for every $R \in \mathrm{Rad}(X)$ with the following properties:
\begin{enumerate}
\item $\mathrm{IBr}(B\mid R)^a = \mathrm{IBr}(B\mid R^a)$ for every $R \in \mathrm{Rad}(X)$, $a \in\mathrm{ Aut}(X)_B$,
\item $\mathrm{IBr}(B)=\dot{\bigcup}_{R\in\mathrm{Rad}(X)/ \sim_X }\mathrm{IBr}(B\mid R)$.
\end{enumerate}
  \item[(2)] For every $R\in\mathrm{Rad}(X)$ there exists a bijection
  $$\Omega_R^X:\mathrm{IBr}(B\mid R)\rightarrow \mathrm{dz}(N_X(R)/R, B) $$
  such that $\Omega_R^X(\varphi)^a=\Omega_{R^a}^X(\varphi^a)$ for all $\varphi\in\mathrm{IBr}(B\mid R)$ and $a\in\mathrm{Aut}(X)_B.$
  \item[(3)] For every $R \in \mathrm{Rad}(X)$ and every $\varphi\in\mathrm{IBr}(B\mid R)$, there exist a finite group $A:=A(\varphi,R)$ and $\tilde{\varphi}\in\mathrm{IBr}(A)$, $\tilde{\varphi'}\in\mathrm{IBr}(N_A(\bar{R}))$ where we use the notation
      $$\bar{R}=RZ/Z\mathrm{\ and\ } Z=Z(X)\cap \ker(\varphi),$$
      with the following properties:
\begin{enumerate}
  \item for $\bar{X} := X/Z$ the group $A$ satisfies $\bar{X}\unlhd A$, $A/C_A(\bar{X})\cong\mathrm{Aut}(X)_\varphi$, $C_A(\bar{X}) = Z(A)$ and $l\nmid |Z(A)|$,
  \item $\tilde{\varphi}$ is an extension of the $l$-Brauer character of $\bar{X}$ associated with $\varphi$,
  \item  $\tilde{\varphi'}$ is an extension of the $l$-Brauer character of $N_{\bar{X}}(\bar{R})$ associated with the inflation of $\Omega_R^X(\varphi)^\circ\in\mathrm{IBr}(N_X(R)/R)$ to $N_X(R)$,
  \item $\mathrm{bl}(\tilde{\varphi}_J)=(\mathrm{bl}(\tilde{\varphi'}_{N_J(\bar{R})}))^J$ for every subgroup $J$ satisfying $\bar{X }\leq J \leq A$.
\end{enumerate}
\end{enumerate}
\end{definition}
\begin{definition}
Let $l$ be a prime, $S$ a finite non-abelian simple group and $X$ the universal $l'$-covering group of $S$. We say that the inductive blockwise Alperin weight (iBAW) condition holds for $S$ and the $l$ if the (iBAW) condition holds for every $l$-block of $X$.

\end{definition}
\begin{lemma}(\cite[Lemma 2.11]{GD})\label{equbij}
Let $l$ be a prime, $S$ a finite non-abelian simple group and $X$ the universal $l'$-covering group of $S$. Let $B$ be an $l$-block of $X$. If there is an $\mathrm{Aut}(X)_B$-equivariant bijection between $\mathrm{IBr}(B)$ and $\mathcal{W}(B)$, then there are natural defined sets $\mathrm{IBr}(B\mid Q)$ and bijections
$\Omega_Q^X$ such that condition (1) and (2) of Definition \ref{ibaw}  hold for $B$.
\end{lemma}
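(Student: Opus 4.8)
The plan is to construct the required data directly from the given $\mathrm{Aut}(X)_B$-equivariant bijection $\Omega\colon \mathrm{IBr}(B)\to\mathcal{W}(B)$ by projecting each weight class onto its first component; write $A:=\mathrm{Aut}(X)_B$. First I would record that $A$ acts on the set of $B$-weights of $X$ via $(R,\psi)^a=(R^a,\psi^a)$: automorphisms preserve radical subgroups and defect-zero characters, and since $a$ fixes $B$ and block induction is compatible with automorphisms, $\mathrm{bl}(\psi)^X=B$ forces $\mathrm{bl}(\psi^a)^X=B$. As $\mathrm{Inn}(X)\unlhd\mathrm{Aut}(X)$, this action descends to the set $\mathcal{W}(B)$ of $X$-classes, and the first-component map $\pi\colon\mathcal{W}(B)\to\mathrm{Rad}(X)/\sim_X$, $[(R,\psi)]_X\mapsto[R]_X$, is $A$-equivariant.

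Next I would establish a canonical description of the fibres of $\pi$: for fixed $R\in\mathrm{Rad}(X)$ the assignment $\bar\psi\mapsto[(R,\bar\psi)]_X$ is a bijection from $\mathrm{dz}(N_X(R)/R,B)$ onto $\pi^{-1}([R]_X)$. Injectivity holds because any $x\in X$ conjugating $(R,\bar\psi)$ to $(R,\bar\psi')$ normalises $R$ and hence acts on $\mathrm{Irr}(N_X(R)/R)$ by an inner automorphism, which is trivial; surjectivity follows by conjugating an arbitrary representative so that its first component becomes exactly $R$, the resulting character being independent of the conjugating element for the same reason. This step, where the triviality of inner conjugation on characters is used, is the one I would be most careful to state precisely.

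With these two facts the construction is forced. For every $R$ I set $\mathrm{IBr}(B\mid R):=\Omega^{-1}(\pi^{-1}([R]_X))$, which depends only on the $X$-class of $R$. As $R$ runs over a transversal of $\mathrm{Rad}(X)/\sim_X$ the fibres $\pi^{-1}([R]_X)$ partition $\mathcal{W}(B)$, so their $\Omega$-preimages partition $\mathrm{IBr}(B)$, giving condition (1)(b); the $A$-equivariance of $\Omega$ and $\pi$ then yields $\mathrm{IBr}(B\mid R)^a=\mathrm{IBr}(B\mid R^a)$, which is (1)(a). I then define $\Omega_R^X(\varphi)$ to be the unique $\bar\psi\in\mathrm{dz}(N_X(R)/R,B)$ with $\Omega(\varphi)=[(R,\bar\psi)]_X$; this is the composite of $\Omega$ with the inverse of the fibre bijection, hence a bijection $\mathrm{IBr}(B\mid R)\to\mathrm{dz}(N_X(R)/R,B)$. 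For condition (2) I would compute, for $a\in A$, that $\Omega(\varphi^a)=\Omega(\varphi)^a=[(R,\bar\psi)]_X^a=[(R^a,\bar\psi^a)]_X$ with $\bar\psi^a\in\mathrm{dz}(N_X(R^a)/R^a,B)$, so the uniqueness defining $\Omega_{R^a}^X(\varphi^a)$ gives $\Omega_{R^a}^X(\varphi^a)=\bar\psi^a=\Omega_R^X(\varphi)^a$, exactly the compatibility demanded by (2).

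The individual verifications are short, so the main point to get right is conceptual rather than computational: the uniform definition of $\Omega_R^X$ for every $R$ (not merely on a transversal) via the unique representative whose first component equals $R$. Once this is in place there is no cocycle or consistency obstruction, and equivariance under all of $A$ is automatic. The one hypothesis I would be sure to invoke explicitly at the outset is that the $A$-action on weights preserves the block condition $\mathrm{bl}(\psi)^X=B$, since that is what keeps $\pi$ and the fibre maps inside the block-specific sets $\mathrm{dz}(N_X(R)/R,B)$.
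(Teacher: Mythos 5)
Your proposal is correct, and it coincides with the standard argument for this result: the paper itself offers no proof of Lemma \ref{equbij}, quoting it from \cite[Lemma 2.11]{GD}, and your construction --- partitioning $\mathcal{W}(B)$ by the $X$-class of the radical subgroup, identifying each fibre with $\mathrm{dz}(N_X(R)/R,B)$ via the triviality of inner conjugation on $\mathrm{Irr}(N_X(R))$, and transporting along the equivariant bijection --- is exactly the proof given there. You correctly isolate the two points that need care (that $\mathrm{Aut}(X)_B$ preserves the block condition $\mathrm{bl}(\psi)^X=B$, and that $\Omega_R^X$ must be defined for every $R$, not just a transversal), so there is nothing to add.
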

For convenience, we define the so-called ``unipotent weight'' for later use.
\begin{definition}
Let $G$ be a finite group. We call a weight of $G$ unipotent weight if it's a $B$-weight for some unipotent block $B$ of $G$.
\end{definition}
\subsection{Representations of finite groups of Lie type}
Let $q=p^f$ be a power of a prime $p$ and $l$ be a prime number different from $p$ from now on.

Let $\mathbb{F}_q$ be the finite field with $q$ elements. Suppose $\mathbf{G}$ is a connected reductive algebraic group over $\overline{\mathbb{F}_q}$ and $F:\mathbf{G}\rightarrow\mathbf{G}$ a Frobenius endomorphism endowing $\mathbf{G}$ with an $\mathbb{F}_q$-structure. The group of rational points $G=\mathbf{G}^F$ is finite. Let $\mathbf{G}^*$ be the dual group of $\mathbf{G}$ with the corresponding Frobenius endomorphism also denoted by $F$. When $\mathbf{L}$ is an $F$-invariant Levi subgroup for $\mathbf{G}$ and $\mathbf{P}$ is a parabolic subgroup of $\mathbf{G}$ containing $\mathbf{L}$ as a Levi complement, we denote by $\mathbf{R}_\mathbf{L\subseteq P}^\mathbf{G}$ the Delinge-Lusztig induction and $^*\mathbf{R}_\mathbf{L\subseteq P}^\mathbf{G}$ the Delinge-Lusztig restriction of $\mathbf{L\subseteq P}$. For an $F$-stable subgroup $\mathbf{H}$ of $\mathbf{G}$ and $\chi\in\mathrm{Irr}(\mathbf{H}^F)$, we write $N_{\mathbf{G}^F}(\mathbf{H},\chi)=\{n\in N_{\mathbf{G}^F}(\mathbf{H})\mid\chi^n=\chi\}$ and $W_{\mathbf{G}^F}(\mathbf{H},\chi)=N_{\mathbf{G}^F}(\mathbf{H},\chi)/\mathbf{H}^F$.

If $L$ is a Levi subgroup of a finite group of Lie type $G$ and $P$ is a parabolic subgroup of $G$ containing $L$ as a Levi complement, we denote by $R_{L\subseteq P}^G$ the Harish-Chandra induction and $^*R_{L\subseteq P}^G$ the Harsh-Chandra restriction for $L\subseteq P$. Note that the definition is independent of the choice of $P$ by \cite[Theorem 2.4]{LS}, so we are allowed to omit $P$ from the notation. Also note that for $F$-stable parabolic $\mathbf{P}$ of $\mathbf{G}$, the Delinge-Lusztig induction $\mathbf{R}_\mathbf{L\subseteq P}^\mathbf{G}$ reduces to the Harish-Chandra induction $R_{\mathbf{L}^F}^{\mathbf{G}^F}$.

Let $s$ be a semisimple element of $\mathbf{G}^{*F}$, the Lusztig series associated with $s$ is denoted by $\mathcal{E}(\mathbf{G}^F, s)$. For a semisimple $l'$-element $s$ of $\mathbf{G}^{*F}$, define $\mathcal{E}_l(\mathbf{G}^F, s)$ as the union of $\mathcal{E}(\mathbf{G}^F, t)$ such that $s=t_{l'}$. By \cite[2.2 th\'{e}or\`{e}me]{UnLu},  $\mathcal{E}_l(\mathbf{G}^F, s)$ is a union of $l$-blocks. Moreover, for each $l$-block $B$ such that  $\mathrm{Irr}(B)\subseteq\mathcal{E}_l(\mathbf{G}^F, s)$, we have $\mathrm{Irr}(B)\cap \mathcal{E}(\mathbf{G}^F, s)\neq\varnothing.$ We denote by $\mathcal{E}(\mathbf{L}^F,l')$ the union of $\mathcal{E}(\mathbf{L}^F,s)$ for semisimple $l'$-elements $s$.

\begin{theorem}(\cite[Theorem A]{basicset})\label{basicset}
Let $l$ be a prime good for $\mathbf{G}$ and not dividing the defining characteristic of $\mathbf{G}$.
Assume that $l$ does not divide $|(Z(\mathbf{G})/Z^\circ(\mathbf{G}))^F|$. Let $s\in\mathbf{G}^{*F}$ be a semisimple $l'$-element. Then $\mathcal{E}(\mathbf{G}^F, s)$ forms a basic set of $\mathcal{E}_l(\mathbf{G}^F, s)$.
\end{theorem}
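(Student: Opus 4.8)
The plan is to prove the statement in two stages: a reduction to the case of unipotent blocks by means of Lusztig induction and Jordan decomposition, followed by the unipotent case, which is the Geck--Hiss basic-set theorem. Throughout, the assertion that $\mathcal{E}(\mathbf{G}^F, s)$ is a basic set of $\mathcal{E}_l(\mathbf{G}^F, s)$ means that the square matrix of decomposition numbers with rows indexed by $\mathcal{E}(\mathbf{G}^F, s)$ and columns by $\mathrm{IBr}(\mathcal{E}_l(\mathbf{G}^F, s))$ has determinant $\pm 1$; equivalently, the restrictions $\{\chi^\circ \mid \chi\in\mathcal{E}(\mathbf{G}^F, s)\}$ are $\mathbb{Z}$-linearly independent and $\mathbb{Z}$-span $\mathbb{Z}\,\mathrm{IBr}(\mathcal{E}_l(\mathbf{G}^F, s))$. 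That $\mathcal{E}_l(\mathbf{G}^F, s)$ is a union of $l$-blocks, each of which meets $\mathcal{E}(\mathbf{G}^F, s)$ (recalled above from \cite{UnLu}), guarantees that the problem is well posed block by block and that no ordinary series is lost under the reduction.

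The central tool for the reduction is the commutation of the decomposition map $d$ with Lusztig induction, i.e. $d\circ R_{\mathbf{L}}^{\mathbf{G}}=R_{\mathbf{L}}^{\mathbf{G}}\circ d$ on the relevant Grothendieck groups for an $F$-stable Levi $\mathbf{L}$. Choosing $\mathbf{L}$ dual to a minimal $F$-stable Levi $\mathbf{L}^*$ of $\mathbf{G}^*$ containing $C_{\mathbf{G}^*}(s)$, Lusztig induction restricts to a sign-preserving bijection $\mathcal{E}(\mathbf{L}^F, s)\to\mathcal{E}(\mathbf{G}^F, s)$ (valid because $C_{\mathbf{G}^*}(s)\subseteq\mathbf{L}^*$) and matches $\mathcal{E}_l(\mathbf{L}^F, s)$ with $\mathcal{E}_l(\mathbf{G}^F, s)$; by Bonnafé--Rouquier it even realizes a Morita equivalence between the corresponding sums of blocks. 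Since Morita equivalences preserve decomposition matrices and $d$ commutes with $R_{\mathbf{L}}^{\mathbf{G}}$, the basic-set property transfers from $\mathbf{L}^F$ to $\mathbf{G}^F$, so we may assume $s$ quasi-isolated. Jordan decomposition of characters then identifies $\mathcal{E}(\mathbf{G}^F, s)$ with the unipotent characters $\mathcal{E}(C_{\mathbf{G}^*}(s)^F, 1)$, and the claim reduces to the unipotent basic-set statement for the (possibly disconnected) centralizer. The hypotheses that $l$ is good and that $l\nmid|(Z(\mathbf{G})/Z^\circ(\mathbf{G}))^F|$ enter precisely here: they make this last identification compatible with the $l$-modular structure and neutralise the complications of a disconnected centre, so that passing to $C_{\mathbf{G}^*}(s)^F$ does not alter the number of modular irreducibles.

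It then remains to treat the unipotent case, namely that $\mathcal{E}(\mathbf{G}^F, 1)$ is a basic set of the union of unipotent blocks $\mathcal{E}_l(\mathbf{G}^F, 1)$. I would first match cardinalities, $|\mathcal{E}(\mathbf{G}^F, 1)|=|\mathrm{IBr}(\mathcal{E}_l(\mathbf{G}^F, 1))|$, and then show that the decomposition matrix restricted to the unipotent rows is square unitriangular. For the latter I would order the unipotent characters by Lusztig's $a$-function together with the partition into families, and argue that each Brauer character in a unipotent block possesses, relative to this order, a well-defined lowest unipotent constituent occurring with coefficient $\pm 1$. The inputs are Lusztig's classification of unipotent characters, the Howlett--Lehrer description of Harish-Chandra series through relative Hecke algebras, and the representation theory of the Iwahori--Hecke algebra $\mathcal{H}(W, q)$ at the relevant specialization, whose own triangular decomposition data control those of the unipotent blocks. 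Linear independence of $\{\chi^\circ\}$ is then automatic from the triangular shape.

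The main obstacle is this unitriangularity. It is the deep core of the Geck--Hiss theorem and rests on the full machinery of Lusztig's theory --- the $a$-function, families, and the Fourier transform relating unipotent characters to almost characters --- together with fine control of Deligne--Lusztig induction and of Hecke-algebra specializations; it is also exactly where goodness of $l$ is indispensable, since a bad prime would destroy both the cardinality count and the triangular shape. By comparison the reduction to the unipotent case is formal modulo the commutation of $d$ with Lusztig induction and the Bonnafé--Rouquier equivalence, the only delicate point being the compatibility of Jordan decomposition with the decomposition matrix in the quasi-isolated case when $Z(\mathbf{G})$ is disconnected, which is absorbed by the hypothesis $l\nmid|(Z(\mathbf{G})/Z^\circ(\mathbf{G}))^F|$.
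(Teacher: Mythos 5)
You should first note that the paper contains no proof of this statement at all: Theorem~\ref{basicset} is imported verbatim from Geck--Hiss \cite{basicset}, so the only meaningful comparison is with their proof, and against that proof your plan has two genuine gaps. The first is the reduction step. Bonnaf\'e--Rouquier does let you assume $s$ quasi-isolated, but your next move --- ``Jordan decomposition then identifies $\mathcal{E}(\mathbf{G}^F,s)$ with $\mathcal{E}(C_{\mathbf{G}^*}(s)^F,1)$ \ldots\ compatibly with the $l$-modular structure'' --- is precisely what is not known. Jordan decomposition is a bijection of \emph{ordinary} characters; no compatibility with decomposition maps or Brauer characters is available for quasi-isolated $s$, and the hypothesis $l\nmid|(Z(\mathbf{G})/Z^\circ(\mathbf{G}))^F|$ does not ``absorb'' this difficulty (it plays a different role, see below). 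Nor is the unipotent basic-set statement formulated or proved for the possibly disconnected groups $C_{\mathbf{G}^*}(s)$ that your reduction would land on. Geck--Hiss make no such reduction: their argument never passes through Jordan decomposition of blocks.

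The second gap is your treatment of the unipotent case via unitriangularity. Unitriangularity of unipotent decomposition matrices is vastly stronger than the basic-set property, was not available to Geck--Hiss, and even today is a deep recent theorem with restrictive hypotheses (Brunat--Dudas--Taylor \cite{unitr}, requiring $p$ good and $l$ very good); indeed the present paper takes it as a \emph{separate assumption} in its main theorem exactly because it does not follow from Theorem~\ref{basicset}. Your step ``each Brauer character possesses a well-defined lowest unipotent constituent occurring with coefficient $\pm1$'' simply restates that open-at-the-time theorem, so your proposal rests the easy statement on the hard one, and within this paper's logic it would be circular. The actual mechanism is different and much lighter: the hypotheses that $l$ is good and $l\nmid|(Z(\mathbf{G})/Z^\circ(\mathbf{G}))^F|$ force the centralizer of any semisimple $l$-element $t\in C_{\mathbf{G}^*}(s)^F$ to be a Levi subgroup $\mathbf{L}^*$ of $\mathbf{G}^*$; then any $\chi\in\mathcal{E}(\mathbf{G}^F,st)$ is, up to sign, obtained by Lusztig induction $\mathbf{R}_{\mathbf{L}}^{\mathbf{G}}$ from $\mathcal{E}(\mathbf{L}^F,st)$, and twisting by the linear character $\hat t$ of $\mathbf{L}^F$ --- which is trivial on $l'$-elements --- shows $\chi^\circ\in\mathbb{Z}\{\psi^\circ\mid\psi\in\mathcal{E}(\mathbf{G}^F,s)\}$. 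Spanning of $\mathbb{Z}\,\mathrm{IBr}(\mathcal{E}_l(\mathbf{G}^F,s))$ then follows from the Brou\'e--Michel decomposition \cite{UnLu}, and linear independence of $\{\chi^\circ\mid\chi\in\mathcal{E}(\mathbf{G}^F,s)\}$ is proved by a separate scalar-product argument on $l'$-elements, with no triangularity anywhere. The one ingredient of your sketch that does appear in the real proof is the commutation of the decomposition map with Lusztig induction (in the cases where it was then known); everything built on top of the Jordan-decomposition reduction and on unitriangularity would have to be discarded.
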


Following \cite{uniord} and \cite{CE2}, we use the terminology of Sylow $d$-theory. Let $d$ be a positive integer. An $F$-stable Levi subgroup $\mathbf{L}$ is called $d$-split if $\mathbf{L}=C_{\mathbf{G}}{(Z^\circ(\mathbf{L})_d)}$, where $Z^\circ(\mathbf{L})_d$ is the Sylow $d$-torus of $Z^\circ(\mathbf{L})$. Let $\mathbf{L}_i$ $(i=1,2)$ be a Levi subgroup and $\zeta_i$ $(i=1,2)$ a character of $\mathbf{L}_i^F$. We write $(\mathbf{L}_1,\zeta_1)\geq (\mathbf{L}_2,\zeta_2)$ if $\mathbf{L}_1\supseteq\mathbf{L}_2$ and there is a parabolic subgroup $\mathbf{P}_2\subseteq\mathbf{L}_1$ containing $\mathbf{L}_2$ as a Levi complement and $\langle\zeta_1,\mathbf{R}_{\mathbf{L}_2\subseteq\mathbf{P}_2}^{\mathbf{L}_1}\zeta_2\rangle\neq 0$.

One calls $\geq_d$ the same relation restricted to pairs $\mathbf{L}_1$, $\mathbf{L}_2$ of $d$-split Levi subgroups. We define $\gg_d$ as the transitive closure of $\geq_d.$ If $(\mathbf{L},\chi)$ is $\gg_d$-minimal, then $\chi\in\mathrm{Irr}(\mathbf{L}^F)$ is called a $d$-cuspidal character and $(\mathbf{L},\chi)$ is called a $d$-cuspidal pair. Moreover, such a pair is called a unipotent $d$-cuspidal pair if $\chi$ is a unipotent character.

Let $e$ be the multiplicative order of $q$ mod $l$ throughout out this article.

In \cite{uniord}, the author gives a label for unipotent blocks with unipotent $e$-cuspidal pairs. This was generalized in \cite{CE2}.
\begin{theorem}(\cite[Theorem 2.5 and 4.1]{CE2})\label{uniblo}
Assume $l\geq 7$ if some component of $\mathbf{G}$ is of type $E_8$ and $l\geq 5$ otherwise. Let $e$ be the multiplicative order of $q$ mod $l$. Then
\begin{enumerate}
\item[(1)] for any $e$-cuspidal pair $(\mathbf{L},\zeta)$ of $\mathbf{G}^F$ such that $\zeta\in\mathcal{E}(\mathbf{L}^F,l')$, there exists a unique block  $b_{\mathbf{G}^F}(\mathbf{L},\zeta)$ of $\mathbf{G}^F$ such that for all parabolic subgroup $\mathbf{P}$ containing $\mathbf{L}$ as a Levi complement, all irreducible constitutes of $\mathbf{R}_{\mathbf{L}\subseteq \mathbf{P}}^{\mathbf{G}}\zeta$ are  in $b_{\mathbf{G}^F}(\mathbf{L},\zeta)$;
\item[(2)] the map $$(\mathbf{L},\zeta)\mapsto b_{\mathbf{G}^F}(\mathbf{L},\zeta)$$ is a bijection between the $\mathbf{G}^F$-conjugacy classes of $e$-cuspidal pairs satisfying $\zeta\in\mathcal{E}(\mathbf{L}^F,l')$ and blocks of $\mathbf{G}^F$;
\item[(3)] $\mathrm{Irr}(b_{\mathbf{G}^F}(\mathbf{L},\zeta))\cap \mathcal{E}(\mathbf{G}^F,l')=\{\chi\in\mathrm{Irr}(\mathbf{G}^F)\mid(\mathbf{G},\chi)\gg_e(\mathbf{L},\zeta))\}$.
\end{enumerate}
In particular, $b_{\mathbf{G}^F}(\mathbf{L},\zeta)$ is a unipotent block if and only if $\zeta$ is a unipotent character. Moreover, in this case, $\geq_e$ and $\gg_e$ coincide.
\end{theorem}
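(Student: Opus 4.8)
The plan is to reduce the statement to the known parametrization of \emph{unipotent} blocks by unipotent $e$-cuspidal pairs, and then transport that parametrization across a Jordan decomposition of blocks. First I would invoke the theorem of Brou\'{e} and Michel recalled above, that $\mathcal{E}_l(\mathbf{G}^F, s)$ is a union of $l$-blocks for each semisimple $l'$-element $s$; this partitions $\mathrm{bl}(\mathbf{G}^F)$ according to the $\mathbf{G}^{*F}$-class of $s$ and reduces the problem to describing, for a fixed $s$, the blocks whose irreducible characters lie in $\mathcal{E}_l(\mathbf{G}^F, s)$ together with the $e$-cuspidal pairs $(\mathbf{L},\zeta)$ for which $\zeta\in\mathcal{E}(\mathbf{L}^F, s)$. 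This separates the three assertions into a family of independent problems, one per Lusztig series.

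The next step is to replace $\mathbf{G}$ by the centralizer of $s$. Using the Bonnaf\'{e}--Rouquier Morita equivalence afforded by $\mathbf{R}_{\mathbf{L}}^{\mathbf{G}}$ from a Levi $\mathbf{L}$ dual to a Levi subgroup $\mathbf{L}^*$ containing $C_{\mathbf{G}^*}(s)$, I would first dispose of the case where $s$ is \emph{not} quasi-isolated by descending to such a proper $\mathbf{L}$, where the statement may be assumed known by induction on $\dim\mathbf{G}$, the $e$-cuspidal data of $\mathbf{G}^F$ in $\mathcal{E}(\cdot,s)$ corresponding bijectively to those of $\mathbf{L}^F$. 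This leaves the quasi-isolated case, where Jordan decomposition identifies $\mathcal{E}(\mathbf{G}^F, s)$ with the unipotent characters of $C_{\mathbf{G}^*}(s)^F$ and, at block level, matches blocks in $\mathcal{E}_l(\mathbf{G}^F, s)$ with unipotent blocks of $C_{\mathbf{G}^*}(s)^F$. The hypotheses that $l$ is good and $l\nmid|(Z(\mathbf{G})/Z^\circ(\mathbf{G}))^F|$ are exactly what keep this matching compatible with $e$-Harish-Chandra theory and preserve $e$-cuspidal pairs.

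The heart of the argument is then the unipotent case, which is Cabanes--Enguehard's theorem. Here I would use the $d$-Harish-Chandra theory of Brou\'{e}--Malle--Michel: the relation $\gg_e$ partitions the unipotent characters of $\mathbf{G}^F$ into $e$-Harish-Chandra series indexed by $\mathbf{G}^F$-classes of unipotent $e$-cuspidal pairs $(\mathbf{L},\zeta)$, with $Z^\circ(\mathbf{L})_e$ playing the role of a defect-type torus. For (1) I would show that, under the stated bound on $l$, all constituents of $\mathbf{R}_{\mathbf{L}\subseteq\mathbf{P}}^{\mathbf{G}}\zeta$ share a common central character modulo the maximal ideal over $l$, hence lie in a single block $b_{\mathbf{G}^F}(\mathbf{L},\zeta)$ independent of $\mathbf{P}$. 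For (2), injectivity comes from the uniqueness up to $\mathbf{G}^F$-conjugacy of the $e$-cuspidal pair beneath a block (an $e$-analogue of uniqueness of the cuspidal pair beneath an ordinary Harish-Chandra series), and surjectivity from the fact that every block meets $\mathcal{E}(\mathbf{G}^F,l')$ and that its $\gg_e$-minimal members supply such a pair. Assertion (3) then follows because membership of $\chi\in\mathcal{E}(\mathbf{G}^F,l')$ in $b_{\mathbf{G}^F}(\mathbf{L},\zeta)$ is detected by the central character, which is constant along the $\gg_e$-order above $(\mathbf{L},\zeta)$.

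I expect the main obstacle to be the block-theoretic control underlying (1) and (2): proving that Deligne--Lusztig induction is compatible with the block decomposition, i.e. that $\mathbf{R}_{\mathbf{L}}^{\mathbf{G}}$ neither spreads one $e$-cuspidal datum across several blocks nor collapses distinct data into one. This is where the precise lower bounds on $l$ (good prime, $l\geq 5$, or $l\geq 7$ in type $E_8$) are genuinely needed, since they govern the $l$-part of the relative Weyl groups $W_{\mathbf{G}^F}(\mathbf{L},\zeta)$ and force the defect groups to be abelian enough for the central-character argument to separate blocks cleanly. The concluding ``in particular'' claim---that $b_{\mathbf{G}^F}(\mathbf{L},\zeta)$ is unipotent precisely when $\zeta$ is unipotent, and that $\geq_e$ and $\gg_e$ then agree---I would read off from the Jordan-decomposition matching (unipotent $\zeta$ corresponding to $s=1$) together with the transitivity already built into $d$-Harish-Chandra theory for unipotent characters.
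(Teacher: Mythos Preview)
The paper does not prove this theorem at all: it is stated as a quotation from the literature, namely \cite[Theorem~2.5 and~4.1]{CE2}, and no argument is given. So there is no ``paper's own proof'' to compare against; the authors simply invoke the Cabanes--Enguehard parametrization as a black box.

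Your sketch is a plausible modern outline of how one might assemble such a result, but two points deserve comment. First, your use of the Bonnaf\'e--Rouquier Morita equivalence is anachronistic relative to \cite{CE2} (which predates it) and in any case is unnecessary here: Cabanes--Enguehard work directly with $e$-cuspidal pairs and central characters without passing through a Jordan decomposition of blocks via Morita equivalence. Second, you import the hypothesis $l\nmid|(Z(\mathbf{G})/Z^\circ(\mathbf{G}))^F|$, but that hypothesis belongs to Theorem~\ref{basicset} and is not part of the present statement; the bounds $l\ge5$ (resp.\ $l\ge7$ for $E_8$) are what do the work in \cite{CE2}. Apart from these issues your outline is broadly in the right spirit, but since the paper treats the result as a citation, no proof is expected here.
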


\begin{theorem}(\cite[p.43 3.2]{gen})\label{weyl}
Let $(\mathbf{L},\zeta)$ be a unipotent $d$-cuspidal pair. Then there is a bijection between $\mathrm{Irr}(W_{\mathbf{G}^F}(\mathbf{L},\zeta))$ and $\mathrm{Irr}(\mathbf{G}^F\mid\mathbf{R}^\mathbf{G}_\mathbf{L}(\zeta))$.
\end{theorem}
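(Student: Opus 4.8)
The plan is to identify the constituents on the right-hand side with the simple modules of the endomorphism algebra of the Lusztig-induced (virtual) character $\mathbf{R}^{\mathbf{G}}_{\mathbf{L}}(\zeta)$, and then to show that this endomorphism algebra is an appropriate deformation of the group algebra of the relative Weyl group $W_{\mathbf{G}^F}(\mathbf{L},\zeta)$. Since $\zeta$ is unipotent and $d$-cuspidal, every constituent of $\mathbf{R}^{\mathbf{G}}_{\mathbf{L}}(\zeta)$ is again a unipotent character, and $\mathrm{Irr}(\mathbf{G}^F\mid \mathbf{R}^{\mathbf{G}}_{\mathbf{L}}(\zeta))$ is precisely the $d$-Harish-Chandra series above $(\mathbf{L},\zeta)$. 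First I would record that distinct unipotent $d$-cuspidal pairs (up to $\mathbf{G}^F$-conjugacy) give disjoint series, so that this set is unambiguously attached to $(\mathbf{L},\zeta)$ and the counting to follow is meaningful.

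The first quantitative step is an orthogonality computation. Using the adjunction between $\mathbf{R}^{\mathbf{G}}_{\mathbf{L}}$ and ${}^{*}\mathbf{R}^{\mathbf{G}}_{\mathbf{L}}$ together with the Mackey formula for Lusztig induction, I would evaluate
$$\langle \mathbf{R}^{\mathbf{G}}_{\mathbf{L}}(\zeta),\, \mathbf{R}^{\mathbf{G}}_{\mathbf{L}}(\zeta)\rangle_{\mathbf{G}^F} = \langle {}^{*}\mathbf{R}^{\mathbf{G}}_{\mathbf{L}}\mathbf{R}^{\mathbf{G}}_{\mathbf{L}}(\zeta),\, \zeta\rangle_{\mathbf{L}^F}.$$
The Mackey formula expresses ${}^{*}\mathbf{R}^{\mathbf{G}}_{\mathbf{L}}\mathbf{R}^{\mathbf{G}}_{\mathbf{L}}(\zeta)$ as a sum over double cosets, with a main term $\sum_{w} {}^{w}\zeta$ indexed by $w \in W_{\mathbf{G}^F}(\mathbf{L})$ and correction terms factoring through proper Levi subgroups; the $d$-cuspidality of $\zeta$ forces the latter to contribute nothing to the scalar product against $\zeta$. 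What survives is the number of $w$ fixing $\zeta$, so the scalar product equals $|W_{\mathbf{G}^F}(\mathbf{L},\zeta)|$. This pins down the sum of squares of the multiplicities of the constituents of $\mathbf{R}^{\mathbf{G}}_{\mathbf{L}}(\zeta)$ to be $|W_{\mathbf{G}^F}(\mathbf{L},\zeta)| = \sum_{\psi\in\mathrm{Irr}(W_{\mathbf{G}^F}(\mathbf{L},\zeta))}(\dim\psi)^2$.

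To upgrade this numerical coincidence into a canonical bijection, I would pass to the endomorphism algebra $\mathrm{End}_{\mathbf{G}^F}(\mathbf{R}^{\mathbf{G}}_{\mathbf{L}}(\zeta))$ and identify it with a cyclotomic Hecke algebra $\mathcal{H}$ attached to the complex reflection group $W_{\mathbf{G}^F}(\mathbf{L},\zeta)$, whose parameters are powers of $q$ governed by the $\Phi_d$-local structure of $\mathbf{G}$. Because we work in characteristic zero, this algebra is split semisimple, so by Tits' deformation theorem its irreducible modules are in natural bijection with $\mathrm{Irr}(W_{\mathbf{G}^F}(\mathbf{L},\zeta))$. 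Since the simple modules of the endomorphism algebra index exactly the irreducible constituents of $\mathbf{R}^{\mathbf{G}}_{\mathbf{L}}(\zeta)$, composing these bijections yields the desired map $\mathrm{Irr}(W_{\mathbf{G}^F}(\mathbf{L},\zeta)) \to \mathrm{Irr}(\mathbf{G}^F\mid \mathbf{R}^{\mathbf{G}}_{\mathbf{L}}(\zeta))$.

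The main obstacle is this third step. The Mackey formula for Lusztig induction is not available in full generality, and, more seriously, establishing the Hecke-algebra structure of the endomorphism algebra — together with the assertion that $W_{\mathbf{G}^F}(\mathbf{L},\zeta)$ is a complex reflection group carrying the expected cyclotomic data — is far from formal. For unipotent characters this rests on Lusztig's classification via families and the nonabelian Fourier transform, and on explicit case-by-case information about the relative Weyl groups and their associated Hecke algebras. This is exactly the substantial input supplied by the cited reference \cite{gen}; accordingly, for the purposes of the present paper we invoke that result rather than reproving it.
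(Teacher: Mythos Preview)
The paper does not supply a proof of this theorem at all; it is stated purely as a citation of \cite[p.~43, 3.2]{gen} and is used as a black box. So there is no ``paper's own proof'' against which to compare your proposal, and your final paragraph already arrives at the correct conclusion: invoke the cited reference.

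That said, one point in your sketch deserves a warning. Speaking of the ``endomorphism algebra of $\mathbf{R}^{\mathbf{G}}_{\mathbf{L}}(\zeta)$'' and of its simple modules indexing the constituents is not literally meaningful here: for $d\neq 1$ the Lusztig induction $\mathbf{R}^{\mathbf{G}}_{\mathbf{L}}(\zeta)$ is only a virtual character, not the character of an actual module, so there is no endomorphism ring to take and no Fitting-type correspondence with constituents. The Mackey/scalar-product step you outline is genuine and does give $\langle \mathbf{R}^{\mathbf{G}}_{\mathbf{L}}(\zeta),\mathbf{R}^{\mathbf{G}}_{\mathbf{L}}(\zeta)\rangle=|W_{\mathbf{G}^F}(\mathbf{L},\zeta)|$, but upgrading this to a bijection in \cite{gen} is not done via an honest endomorphism algebra; it relies on Lusztig's classification of unipotent characters and a case-by-case identification of the $d$-series with $\mathrm{Irr}(W_{\mathbf{G}^F}(\mathbf{L},\zeta))$ (the cyclotomic Hecke algebra picture being, at that stage, partly conjectural and partly verified type by type). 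You effectively acknowledge this in your closing paragraph, so your proposal is fine as a heuristic, but the module-theoretic phrasing in the middle should be softened if you ever write it out.
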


Using the above two theorems we can prove the following proposition.

\begin{prop}\label{unicor}
Let $l$ be a prime that does not divide $|Z(\mathbf{G})/Z^\circ(\mathbf{G})|$. Assume $l\geq 7$ if some component of $\mathbf{G}$ is of type $E_8$ and $l\geq 5$ otherwise. Let $\mathbf{H}$ be an $e$-split levi of $\mathbf{G}$ and $b$ a block of $H=\mathbf{H}^F$. Then $b^{G}$ is a unipotent block of $G$ if and only if $b$ is a unipotent block of $H.$
\end{prop}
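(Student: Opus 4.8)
The plan is to collapse the equivalence to a statement about a single $e$-cuspidal pair, running the Cabanes--Enguehard parametrization of Theorem \ref{uniblo} simultaneously inside $\mathbf{H}$ and inside $\mathbf{G}$. First I would check that the hypotheses of Theorem \ref{uniblo} transfer from $\mathbf{G}$ to $\mathbf{H}$: the components of an $e$-split Levi subgroup $\mathbf{H}$ occur among the components of $\mathbf{G}$ (of the same or smaller type), so the bound on $l$ is inherited, and since $l$ is good one also retains $l\nmid|Z(\mathbf{H})/Z^\circ(\mathbf{H})|$. Thus Theorem \ref{uniblo} applies to $H=\mathbf{H}^F$, and I may write $b=b_{\mathbf{H}^F}(\mathbf{L},\zeta)$ for a unique $\mathbf{H}^F$-class of $e$-cuspidal pairs $(\mathbf{L},\zeta)$ of $\mathbf{H}^F$ with $\zeta\in\mathcal{E}(\mathbf{L}^F,l')$; by the final assertion of Theorem \ref{uniblo}, $b$ is a unipotent block of $H$ if and only if $\zeta$ is a unipotent character.

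Next I would promote $(\mathbf{L},\zeta)$ to an $e$-cuspidal pair of $\mathbf{G}^F$. Every Levi subgroup of $\mathbf{H}$ contains a maximal torus of $\mathbf{H}$, hence contains $Z(\mathbf{H})$, so $Z^\circ(\mathbf{H})\subseteq Z^\circ(\mathbf{L})$ and therefore $Z^\circ(\mathbf{H})_e\subseteq Z^\circ(\mathbf{L})_e$. Consequently
$$C_{\mathbf{G}}(Z^\circ(\mathbf{L})_e)\subseteq C_{\mathbf{G}}(Z^\circ(\mathbf{H})_e)=\mathbf{H},$$
and combining this with the $e$-splitness of $\mathbf{L}$ in $\mathbf{H}$ gives $C_{\mathbf{G}}(Z^\circ(\mathbf{L})_e)=C_{\mathbf{H}}(Z^\circ(\mathbf{L})_e)=\mathbf{L}$, so $\mathbf{L}$ is $e$-split in $\mathbf{G}$. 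Since $e$-cuspidality of $\zeta$ is the intrinsic condition that ${}^*\mathbf{R}^{\mathbf{L}}_{\mathbf{M}}\zeta=0$ for every proper $e$-split Levi $\mathbf{M}\subsetneq\mathbf{L}$, it makes no reference to the ambient group; hence $(\mathbf{L},\zeta)$ is an $e$-cuspidal pair of $\mathbf{G}^F$ as well.

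The crux is to identify $b^G$ with $b_{\mathbf{G}^F}(\mathbf{L},\zeta)$. For this I would use the transitivity of Lusztig induction $\mathbf{R}^{\mathbf{G}}_{\mathbf{L}}=\mathbf{R}^{\mathbf{G}}_{\mathbf{H}}\circ\mathbf{R}^{\mathbf{H}}_{\mathbf{L}}$ together with the compatibility of Lusztig induction from an $F$-stable Levi with Brauer block induction (for any block $c$ of $\mathbf{H}^F$, the irreducible constituents of $\mathbf{R}^{\mathbf{G}}_{\mathbf{H}}$ applied to characters of $c$ lie in $c^{\mathbf{G}^F}$). Every constituent of $\mathbf{R}^{\mathbf{H}}_{\mathbf{L}}\zeta$ lies in $b$ by Theorem \ref{uniblo}(1) for $\mathbf{H}$, so every constituent of $\mathbf{R}^{\mathbf{G}}_{\mathbf{H}}(\mathbf{R}^{\mathbf{H}}_{\mathbf{L}}\zeta)=\mathbf{R}^{\mathbf{G}}_{\mathbf{L}}\zeta$ lies in $b^G$; on the other hand the same constituents lie in $b_{\mathbf{G}^F}(\mathbf{L},\zeta)$ by Theorem \ref{uniblo}(1) for $\mathbf{G}$. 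As $\mathbf{R}^{\mathbf{G}}_{\mathbf{L}}\zeta\neq 0$ (its degree is $\pm[\mathbf{G}^F:\mathbf{L}^F]_{p'}\zeta(1)\neq 0$), the two blocks share a constituent and therefore coincide. Combining the three steps yields: $b$ is unipotent $\iff$ $\zeta$ is unipotent $\iff$ $b^G=b_{\mathbf{G}^F}(\mathbf{L},\zeta)$ is unipotent, which is exactly the assertion.

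I expect the main obstacle to be this third step, and specifically the justification that Brauer block induction $b^G$ is well defined for the inclusion $\mathbf{H}\le\mathbf{G}$ and is compatible with Lusztig induction. This input belongs to the general block theory of finite reductive groups (transitivity of $\mathbf{R}$ and the fact that $\mathbf{R}^{\mathbf{G}}_{\mathbf{H}}$ carries $\mathrm{Irr}(b)$ into $\mathbb{Z}\,\mathrm{Irr}(b^{\mathbf{G}^F})$) rather than to anything peculiar to type $E_6$, so I would cite it and not reprove it. The remaining work is bookkeeping: confirming that the numerical hypotheses on $l$ and on $Z(\cdot)/Z^\circ(\cdot)$ genuinely descend to $\mathbf{H}$ and to $\mathbf{L}$, and noting the elementary non-vanishing of $\mathbf{R}^{\mathbf{G}}_{\mathbf{L}}\zeta$.
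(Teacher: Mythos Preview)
Your framework matches the paper's: both write $b=b_{\mathbf{H}^F}(\mathbf{L},\zeta)$ via Theorem~\ref{uniblo}, observe that unipotence of $b$ and of $b_{\mathbf{G}^F}(\mathbf{L},\zeta)$ are each equivalent to unipotence of $\zeta$, and then reduce everything to the identification $b^{G}=b_{\mathbf{G}^F}(\mathbf{L},\zeta)$. The difference is entirely in how this identification is made. You route it through transitivity of Lusztig induction together with a claimed compatibility of $\mathbf{R}^{\mathbf{G}}_{\mathbf{H}}$ with Brauer block induction; the paper instead factors ordinary Brauer induction through $\mathbf{L}^F$. Concretely, \cite[Proposition~2.2]{uniord} gives $\mathbf{L}=C^{\circ}_{\mathbf{G}}(Z(\mathbf{L})^F_l)$ and $\mathbf{L}^F=C_{\mathbf{G}}(Z(\mathbf{L})^F_l)^F$, so $\mathbf{L}^F$ is the centralizer of an $l$-subgroup in both $H$ and $G$; then \cite[Theorem~2.5]{CE2} yields $\mathrm{bl}(\zeta)^{H}=b$ and $\mathrm{bl}(\zeta)^{G}=b_{\mathbf{G}^F}(\mathbf{L},\zeta)$, and transitivity of Brauer induction gives $b^{G}=b_{\mathbf{G}^F}(\mathbf{L},\zeta)$ at once.

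The paper's route is shorter and sidesteps precisely the obstacle you flag. The general assertion that $\mathbf{R}^{\mathbf{G}}_{\mathbf{H}}$ carries $\mathrm{Irr}(b)$ into $\mathbb{Z}\,\mathrm{Irr}(b^{G})$ for an $e$-split Levi $\mathbf{H}$ is not a ready-made black box one can cite (Brou\'e--Michel \cite{UnLu} gives compatibility with the $l$-series $\mathcal{E}_l(\cdot,s)$, which is only a union of blocks), and it already presupposes that $b^{G}$ is defined, which is part of what needs proving. By descending to $\mathbf{L}^F$ the paper converts both issues into a single appeal to transitivity of Brauer induction, available immediately from the centralizer description of $\mathbf{L}^F$. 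Your verification that $(\mathbf{L},\zeta)$ remains an $e$-cuspidal pair in $\mathbf{G}$ is correct and is implicitly used in the paper as well.
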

\begin{proof} By Theorem \ref{uniblo}, we may write $b=b_{H}(\mathbf{L},\lambda)$ for some $e$-cuspidal pair $(\mathbf{L},\lambda)$ of $\mathbf{H}$. Let $B=b_{G}(\mathbf{L},\lambda)$. Then $b$ is unipotent if and only if $B$ is unipotent. By \cite[Proposition 2.2]{uniord}, $\mathbf{L}=C_\mathbf{G}^\circ(Z(\mathbf{L})_l^F)$ and $\mathbf{L}^F=C_\mathbf{G}(Z(\mathbf{L})_l^F)^F$. It follows from \cite[Theorem 2.5]{CE2} that $\mathrm{bl}(\lambda)^H=b$ and $\mathrm{bl}(\lambda)^G=B$. In particular, $b^G=B$. This completes our proof.
\end{proof}

In most of our cases, the Sylow $l$-subgroup of $G$ is abelian. So we can apply the results in \cite{abe}, which we summarize as the following two theorems.
\begin{theorem}\label{levirad}
Let $l\geq 5$ be a prime. Suppose the Sylow $l$-subgroup of $G$ is abelian. Then
\begin{enumerate}
  \item[(1)]$N_{\mathbf{G}^F}(\mathbf{L})/\mathbf{L}^F$ has order prime to $l$ for any $e$-split Levi subgroup $\mathbf{L}$ of $\mathbf{G}$;
  \item[(2)] the two maps
$$\Phi: R\mapsto C_\mathbf{G}(R),\ \ \ \ \ \ \ \ \Psi: \mathbf{L}\mapsto Z(\mathbf{L})_l^F,$$
are mutually inverse $G$-equivariant bijections between the set of radical subgroups of $G$ and the set of $e$-split Levi subgroups of $G$;
\item[(3)]if $\mathbf{L}$ and $R$ are correspondent to each other as in (2), then $N_G(R)=N_G(\mathbf{L})$ .
\end{enumerate}
\end{theorem}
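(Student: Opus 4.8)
\emph{Plan.} I would treat the three assertions in order, extracting the structural content of the abelian-Sylow hypothesis in (1) and then deducing (2) and (3) from it by bookkeeping with centralizers and centers. Throughout I would use that, since a Sylow $l$-subgroup of $G=\mathbf{G}^F$ is abelian and $l\geq 5$ is good with $l\nmid|Z(\mathbf{G})/Z^\circ(\mathbf{G})|$, every $l$-subgroup of $G$ is abelian, and its elements are semisimple $l$-elements (as $l\neq p$), hence lie in a maximal torus; this is what lets centralizers of radical subgroups be analysed as centralizers of subtori.

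\emph{Part (1).} First I would record that the relative Weyl group $W:=N_{\mathbf{G}^F}(\mathbf{L})/\mathbf{L}^F$ acts faithfully on the Sylow $e$-torus $Z^\circ(\mathbf{L})_e$, which is immediate from $\mathbf{L}=C_{\mathbf{G}}(Z^\circ(\mathbf{L})_e)$. Suppose for contradiction that $l\mid|W|$. Taking a Sylow $l$-subgroup $P$ of $N_{\mathbf{G}^F}(\mathbf{L})$, its image in $W$ is a nontrivial Sylow $l$-subgroup of $W$, so there is an $l$-element $n\in P\setminus\mathbf{L}^F$ acting nontrivially on $Z^\circ(\mathbf{L})_e$; for $l\geq 5$ the $l$-torsion $R:=Z^\circ(\mathbf{L})_l^F$ of that torus is large enough to carry a faithful action of $\langle n\rangle$, so $n$ does not centralise $R$. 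Then $\langle R,n\rangle=R\langle n\rangle$ is a non-abelian $l$-subgroup of $G$, contradicting that every $l$-subgroup of $G$ is abelian. Hence $l\nmid|W|$.

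\emph{Parts (2) and (3).} I would check that $\Phi$ and $\Psi$ are well defined, mutually inverse and $G$-equivariant, using (1) at two points. For radicality of $\Psi(\mathbf{L})$: by (1) any normal $l$-subgroup of $N_G(\mathbf{L})$ maps trivially to the $l'$-group $W$, hence lies in $\mathbf{L}^F$ and so in $O_l(\mathbf{L}^F)=Z^\circ(\mathbf{L})_l^F=R$ (here $l$ good gives that $O_l(\mathbf{L}^F)$ is exactly the central $l$-part); since $R$ is itself normal in $N_G(\mathbf{L})$, we get $O_l(N_G(\mathbf{L}))=R$. For $\Phi(R)=C_{\mathbf{G}}(R)$ being an $e$-split Levi: $R$ is an abelian group of semisimple elements, so lies in a maximal torus, and the hypotheses force $\mathbf{L}:=C_{\mathbf{G}}(R)$ to be a connected reductive group whose root subsystem is a \emph{Levi} subsystem; one then checks $R\subseteq Z^\circ(\mathbf{L})_l^F$ and, for $l$ large, $C_{\mathbf{G}}(Z^\circ(\mathbf{L})_l^F)=C_{\mathbf{G}}(Z^\circ(\mathbf{L})_e)$, so $\mathbf{L}=C_{\mathbf{G}}(Z^\circ(\mathbf{L})_e)$ is $e$-split. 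Mutual inversion follows: for $e$-split $\mathbf{L}$ one has $\Phi(\Psi(\mathbf{L}))=C_{\mathbf{G}}(Z^\circ(\mathbf{L})_l^F)=\mathbf{L}$, while for radical $R$ the chain $R\subseteq Z^\circ(\mathbf{L})_l^F\subseteq O_l(N_G(\mathbf{L}))=O_l(N_G(R))=R$ forces $\Psi(\Phi(R))=R$; $G$-equivariance is clear since centralizers and centers commute with conjugation. Finally, (3) is then formal: $N_G(\mathbf{L})$ normalises the characteristic subgroup $R=Z^\circ(\mathbf{L})_l^F$ of $\mathbf{L}^F$, giving $N_G(\mathbf{L})\subseteq N_G(R)$, and any $g\in N_G(R)$ satisfies $C_{\mathbf{G}}(R)^g=C_{\mathbf{G}}(R^g)=\mathbf{L}$, giving the reverse inclusion.

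\emph{Main obstacle.} The one genuinely nontrivial point is the well-definedness of $\Phi$, namely that the centralizer of a radical (abelian) $l$-subgroup is an \emph{honest} $e$-split Levi subgroup rather than merely a reductive pseudo-Levi subgroup; this is exactly where $l\geq 5$ good, the condition on $Z(\mathbf{G})/Z^\circ(\mathbf{G})$, and the abelian-Sylow structure results of \cite{abe} are needed to rule out proper pseudo-Levi centralizers. Granting that input, the faithfulness argument for (1), the $O_l$-computation, the mutual inversion, and (3) all follow by the routine verifications sketched above.
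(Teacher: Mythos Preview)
The paper does not give its own proof of this theorem: it is stated as a summary of results from Malle~\cite{abe} (see the sentence preceding Theorem~\ref{levirad}), so there is no in-paper argument to compare against. Your proposal is therefore an attempted reconstruction of an argument that the paper simply imports.

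As a reconstruction, your outline is broadly faithful to how these results are obtained, and you correctly isolate the one substantive point: that $C_{\mathbf G}(R)$ is an honest $e$-split Levi subgroup rather than a mere pseudo-Levi, which is precisely the content supplied by the hypotheses ($l\ge 5$ good, $l\nmid|Z(\mathbf G)/Z^\circ(\mathbf G)|$, abelian Sylow) via~\cite{abe}. The bookkeeping for mutual inversion and for (3) is fine.

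One point in your treatment of (1) is not quite justified as written. You assert that an $l$-element $n\in N_{\mathbf G^F}(\mathbf L)\setminus\mathbf L^F$ acting nontrivially on the algebraic $e$-torus $Z^\circ(\mathbf L)_e$ must already act nontrivially on the finite $l$-group $R=Z^\circ(\mathbf L)_l^F$, because ``the $l$-torsion is large enough''. This step is delicate: an order-$l$ element of $\mathrm{GL}_d(\mathbb Z)$ acting on the cocharacter lattice can reduce to a unipotent (hence possibly near-trivial) element modulo~$l$, and one has to argue carefully that it does not kill the $l$-torsion of the $F$-fixed points. A cleaner route, and the one closer in spirit to~\cite{abe}, is to first establish $C_{\mathbf G}(R)=\mathbf L$ for $R=Z^\circ(\mathbf L)_l^F$ (this is the same input you already grant for $\Phi\circ\Psi=\mathrm{id}$), and then observe that a Sylow $l$-subgroup of $N_{\mathbf G^F}(\mathbf L)$, being abelian and containing a Sylow $l$-subgroup of $\mathbf L^F\supseteq R$, centralises $R$ and hence lies in $\mathbf L^F$; thus $N_{\mathbf G^F}(\mathbf L)/\mathbf L^F$ is an $l'$-group. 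This avoids the faithfulness-on-$l$-torsion issue entirely and makes transparent that (1) and (2) rest on the same single technical input from~\cite{abe}.
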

\begin{theorem}\label{malle}
Let $l\geq 5$ be a prime. Suppose the Sylow $l$-subgroup of $G$ is abelian. Let $(\mathbf{L},\zeta)$ be an $e$-cuspidal pair of $\mathbf{G}^F$ with $\zeta\in\mathcal{E}(\mathbf{L}^F,l')$. Let $R=Z(\mathbf{L})_l^F$. Then
\begin{enumerate}
  \item[(1)] $\delta(b_{G}(\mathbf{L},\zeta))=R$;
  \item[(2)] Let $Q$ be a radical subgroup of $G$ and $\lambda\in\mathrm{dz}(C_{G}(Q)/Q)$. Then $\mathrm{bl}(\lambda)^G=b_{G}(\mathbf{L},\zeta)$ if and only if $Q=R$ and $\lambda=\zeta$.
\end{enumerate}
In particular, if $\zeta$ is extensible to $N_{\mathbf{G}^F}(\mathbf{L},\zeta)$, then the weights of $b_{G}(\mathbf{L},\zeta)$ are exactly
$$\{(R,\phi^{N_G(R)})\mid\phi\in \mathrm{Irr}(N_{\mathbf{G}^F}(\mathbf{L},\zeta)\mid\zeta)\},$$
hence $$|\mathcal{W}( b_{G}(\mathbf{L},\zeta))|= |\mathrm{Irr}( W_{\mathbf{G}^F}(\mathbf{L},\zeta)|.$$
\end{theorem}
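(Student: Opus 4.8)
The plan is to read the defect group and the entire weight parametrisation off the two $G$-equivariant bijections recorded in Theorem~\ref{levirad}, combined with the block classification of Theorem~\ref{uniblo}; in other words, to translate the question entirely into generalized Harish-Chandra theory and then finish with Clifford theory. First, for part~(1), I observe that $\mathbf{L}$, being the Levi of an $e$-cuspidal pair, is $e$-split, so Theorem~\ref{levirad}(2)--(3) makes $R=Z(\mathbf{L})_l^F$ a radical subgroup with $C_\mathbf{G}(R)=\mathbf{L}$ and $N_G(R)=N_G(\mathbf{L})$. Since $R$ is central in $L=\mathbf{L}^F$ and $\zeta$ is $e$-cuspidal in $\mathcal{E}(L,l')$, the local block $\mathrm{bl}(\zeta)$ should have defect group exactly $R$: in the abelian-Sylow regime, $e$-cuspidality of $\zeta$ is equivalent to $\zeta$ having $l$-defect zero modulo $R$. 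The defect group of $b_G(\mathbf{L},\zeta)=\mathrm{bl}(\zeta)^G$ then cannot grow, because every defect group lies inside the abelian Sylow $l$-subgroup and the Brauer correspondent is controlled by the $e$-cuspidal pair; hence $\delta(b_G(\mathbf{L},\zeta))=R$.

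For part~(2) I would transport all data through the $\Phi/\Psi$ correspondence. Given a radical $Q$ and $\lambda\in\mathrm{dz}(C_G(Q)/Q)$, Theorem~\ref{levirad} identifies $\mathbf{M}:=C_\mathbf{G}(Q)$ as an $e$-split Levi with $Q=Z(\mathbf{M})_l^F$ and $C_G(Q)=\mathbf{M}^F=:M$. The defect-zero hypothesis means precisely $\lambda(1)_l=|M/Q|_l$, and by the same degree/defect equivalence as above this forces $(\mathbf{M},\lambda)$ to be an $e$-cuspidal pair with $\lambda\in\mathcal{E}(M,l')$. By transitivity of Brauer induction and Theorem~\ref{uniblo}(1), $\mathrm{bl}(\lambda)^G=b_G(\mathbf{M},\lambda)$, so the hypothesis $\mathrm{bl}(\lambda)^G=b_G(\mathbf{L},\zeta)$ together with the bijectivity of $(\mathbf{M},\lambda)\mapsto b_G(\mathbf{M},\lambda)$ in Theorem~\ref{uniblo}(2) shows that $(\mathbf{M},\lambda)$ and $(\mathbf{L},\zeta)$ are $G$-conjugate; by $G$-equivariance of $\Psi$ this is exactly the assertion $Q=R$, $\lambda=\zeta$ up to conjugacy. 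The converse is immediate, since $R$ is radical, $\zeta\in\mathrm{dz}(L/R)$, and $\mathrm{bl}(\zeta)^G=b_G(\mathbf{L},\zeta)$ by Theorem~\ref{uniblo}.

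For the displayed weight count I would run Clifford theory over the normal subgroup $L\trianglelefteq N_G(R)$, crucially using that $N_G(R)/L=W_{\mathbf{G}^F}(\mathbf{L})$ has order prime to $l$ by Theorem~\ref{levirad}(1). Because $\zeta$ has $l$-defect zero in $L/R$ and the quotient $N_G(R)/L$ is an $l'$-group, every $\varphi\in\mathrm{Irr}(N_G(R)/R\mid\zeta)$ automatically has $l$-defect zero, so $\mathrm{dz}(N_G(R)/R,B)$ coincides with the inflation of $\mathrm{Irr}(N_G(R)\mid\zeta)$. Inducing from the stabiliser $N_{\mathbf{G}^F}(\mathbf{L},\zeta)$ gives a bijection onto $\mathrm{Irr}(N_{\mathbf{G}^F}(\mathbf{L},\zeta)\mid\zeta)$, and the hypothesis that $\zeta$ extends to $N_{\mathbf{G}^F}(\mathbf{L},\zeta)$ lets me apply Gallagher's theorem to identify this set with $\mathrm{Irr}(W_{\mathbf{G}^F}(\mathbf{L},\zeta))$, yielding both the explicit description $\{(R,\phi^{N_G(R)})\}$ and the equality $|\mathcal{W}(b_G(\mathbf{L},\zeta))|=|\mathrm{Irr}(W_{\mathbf{G}^F}(\mathbf{L},\zeta))|$.

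The main obstacle is the equivalence ``$\lambda\in\mathrm{dz}(C_G(Q)/Q)\Leftrightarrow(\mathbf{M},\lambda)$ is $e$-cuspidal'', i.e.\ the precise defect-group computation identifying the central $l$-torus $Z(\mathbf{M})_l^F$ as the full defect group. This is where the abelian-Sylow hypothesis and the deep block theory of Cabanes--Enguehard (equivalently \cite{abe}) genuinely enter, through the $\Phi_e$-valuation analysis of character degrees; everything else is a routine translation via Theorems~\ref{levirad} and~\ref{uniblo} followed by Clifford-theoretic bookkeeping.
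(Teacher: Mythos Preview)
The paper does not supply its own proof of this theorem; together with Theorem~\ref{levirad} it is explicitly quoted as a summary of results from \cite{abe}. Your outline is a faithful reconstruction of how that argument runs: transport everything through the radical/$e$-split Levi bijection of Theorem~\ref{levirad}, feed the resulting pairs into the $e$-cuspidal parametrisation of blocks in Theorem~\ref{uniblo}, and finish the weight count by Clifford--Gallagher over the $l'$-quotient $N_G(R)/L$. You have also correctly isolated the only non-formal ingredient, namely the equivalence ``$\lambda\in\mathrm{dz}(C_G(Q)/Q)$ with $\lambda\in\mathcal{E}(M,l')$'' $\Leftrightarrow$ ``$(\mathbf{M},\lambda)$ is $e$-cuspidal'', which is exactly where the $\Phi_e$-valuation analysis of \cite{abe} enters.

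Two small cautions. First, in part~(1) your phrase ``the defect group \dots\ then cannot grow'' is not how the argument actually goes: defect groups can and do grow under Brauer induction in general, and what \cite{abe} does instead is compute the $l$-part of the degrees of the constituents of $\mathbf{R}_{\mathbf{L}}^{\mathbf{G}}\zeta$ directly, showing they already attain $|G|_l/|R|$. Second, in part~(2) you silently assume that a defect-zero $\lambda\in\mathrm{Irr}(M/Q)$ automatically lies in $\mathcal{E}(M,l')$; this is true but needs the observation that a block of $M$ with central defect group $Q$ has a unique ordinary irreducible with $Q$ in its kernel, and that this character coincides with the $e$-cuspidal label $\zeta'\in\mathcal{E}(M,l')$ of the block because characters in $l'$-series have trivial central $l$-character. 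With these two points made explicit, your sketch matches the source.
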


The unitriangular shape of the decomposition matrix plays a vital role in our proof, so we state it here for later use.
\begin{theorem}(\cite[Theorem A]{unitr})\label{unitr}
Let $l$ be a prime very good for $\mathbf{G}$. Assume $p$ is good for $\mathbf{G}$. Let $B$ be a unipotent block of $G=\mathbf{G}^F$. Then the sub-matrix of the decomposition matrix of $B$ with respect to the basic set $\mathcal{E}(G,1)\cap B$ is unitriangular.
\end{theorem}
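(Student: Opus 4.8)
The plan is to exhibit, for the basic set $\mathcal{E}(G,1)\cap B$ of unipotent characters, a family of \emph{projective} characters that sits in unitriangular duality with it; the existence of such a dual family forces the decomposition submatrix $D_B$ to be lower unitriangular. Recall first that $\mathcal{E}(G,1)$ is a basic set of the unipotent blocks by Theorem \ref{basicset} (the hypotheses there being implied by $p$ good and $l$ very good), so $\mathcal{E}(G,1)\cap B$ is a basic set of $B$ and $D_B$ is a square, $\mathbb{Z}$-invertible matrix; it then suffices to order its rows and columns. I would order the unipotent characters of $B$ by the Lusztig $a$-function $a(\chi)$, refined by any fixed total order within a given $a$-value, this being the invariant that governs generic degrees and the partial order of Lusztig families. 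All the difficulty is then to produce, for each unipotent $\chi\in B$, one projective character $\Psi_\chi$ with $\langle\Psi_\chi,\chi\rangle=1$ and $\langle\Psi_\chi,\chi'\rangle=0$ whenever $\chi'$ is strictly below $\chi$ in this order.

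The natural source of such projectives is Kawanaka's generalized Gelfand--Graev representations (GGGRs): to an $F$-stable unipotent class $\mathcal{O}$ and a well-chosen $u\in\mathcal{O}^F$ one attaches, via the weighted Dynkin diagram, a unipotent subgroup and a linear character, and induces to obtain $\Gamma_u$. The first key input I would invoke is that, since $p$ is good for $\mathbf{G}$ and $l$ is very good, the GGGRs are genuine \emph{projective} characters of $G$, i.e. they vanish on $l$-singular elements; this modular projectivity statement is precisely where the hypotheses on $p$ and $l$ are consumed, and it guarantees that each $\Gamma_u$ is a nonnegative integral combination of projective indecomposables with computable multiplicities $\langle\Gamma_u,\chi\rangle$ against unipotent characters.

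The heart of the argument, and the step I expect to be the main obstacle, is to show that the transition between GGGRs and unipotent characters is itself unitriangular for the chosen order. To a unipotent $\chi$ one assigns its wave front set $\mathcal{O}^*_\chi$ (the unipotent support of its Alvis--Curtis dual), an $F$-stable class of the expected dimension; choosing $u$ a suitable representative of $\mathcal{O}^*_\chi$, Lusztig's character-sheaf computation of GGGRs together with the generalized Springer correspondence (as made explicit by Geck--Malle and by Taylor) yields that $\langle\Gamma_u,\chi\rangle$ is a unit, up to a normalization by the component group $A_{\mathbf{G}}(u)$ which is prime to $l$ under our hypotheses, while $\langle\Gamma_u,\chi'\rangle=0$ unless $\mathcal{O}^*_\chi\subseteq\overline{\mathcal{O}^*_{\chi'}}$, an inclusion that pins down the $a$-order. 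Extracting the desired $\Psi_\chi$ requires a small triangular $\mathbb{Z}$-linear adjustment among the $\Gamma_u$ sharing one $a$-value so that the diagonal pairing becomes exactly $1$; the control over this adjustment again rests on the component-group normalizations being units modulo $l$, which is the delicate point that both the good and the very good hypotheses are designed to secure.

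Finally I would pass to the block $B$. Since $B$ is a unipotent block, its unipotent characters are exactly those lying over a fixed unipotent $e$-cuspidal pair by Theorem \ref{uniblo}, and the block idempotent $e_B$ acts on projective characters; replacing each $\Psi_\chi$ by its block component $e_B\Psi_\chi$ preserves projectivity and preserves all pairings $\langle\Psi_\chi,\chi'\rangle$ with $\chi'\in\mathcal{E}(G,1)\cap B$, since multiplicities against characters outside $B$ drop out. Restricting a unitriangular matrix to the common index subset $\mathcal{E}(G,1)\cap B$ leaves it unitriangular, so the block-projectives $\{e_B\Psi_\chi\}$ are in unitriangular duality with the basic set ordered by $a$. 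The standard fact that a family in unitriangular duality with a basic set forces the decomposition matrix itself to be unitriangular then completes the proof. The genuine obstacles throughout are the two deep geometric inputs: the $l$-modular projectivity of GGGRs under good $p$ and very good $l$, and the exact unit value of the diagonal GGGR--character pairing.
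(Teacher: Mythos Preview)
The paper does not prove this theorem at all; it is quoted from the cited reference \cite{unitr} (Brunat--Dudas--Taylor) and used as a black box in the verification of the inductive condition. There is therefore no ``paper's own proof'' to compare against.

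Your sketch is, in broad outline, the strategy of that reference: produce projective characters from Kawanaka's generalized Gelfand--Graev representations and exhibit a unitriangular pairing with the unipotent characters ordered by unipotent support (equivalently, by the $a$-function). Two points of accuracy are worth flagging. First, the $l$-modular projectivity of the GGGRs is immediate once $l\neq p$, since each $\Gamma_u$ is induced from a character of a $p$-subgroup; the hypotheses on $p$ and $l$ are not ``consumed'' there. Rather, $p$ good is needed for the geometric machinery (unipotent supports, wave front sets, Lusztig's character-sheaf formula for GGGRs) to apply, and $l$ very good ensures, among other things, that the component groups $A_{\mathbf G}(u)$ have order prime to $l$ and that Theorem~\ref{basicset} is available. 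Second, the step you describe as a ``small triangular $\mathbb{Z}$-linear adjustment among the $\Gamma_u$ sharing one $a$-value'' is in fact the substantial contribution of \cite{unitr}: when several unipotent characters share the same wave front set $\mathcal O$, one must refine $\Gamma_u$ into \emph{Kawanaka characters} indexed by suitable irreducible characters of $A_{\mathbf G}(u)$, and establishing that these refinements pair unitriangularly with the unipotent characters in the corresponding Lusztig family is a delicate argument involving Lusztig's non-abelian Fourier transform, not a routine normalization. Your outline is on the right track, but this last step is the genuine obstacle and should not be minimized.
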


From now on, let $G=E_6^\varepsilon(q)$ of universal version with $\varepsilon\in\{\pm 1\}$ and $\mathbf{G}$ a simple-connected algebraic group with Frobenius map $F$ such that $\mathbf{G}^F=G$. Assume $2,3\nmid q$ and $l\geq 5$. Let $\nu$ be the $l$-valuation such that $\nu(l)=1.$


\subsection{Structure of Levi subgroups}
In this subsection, we recall some facts on the structure of Levi subgroups (See also \cite[p.93-94]{levie}).

Let $\mathbf{G}$ be the group of Lie type $E_6^\varepsilon$, $\Phi$ be the set of simple roots of the root system of $\mathbf{G}$, $\Delta$ the Dynkin diagram of $\mathbf{G}$ and $W$ the Weyl group of $\mathbf{G}$. Let $\mathbf{T}$ be the maximal torus of $\mathbf{G}$ with $\mathbf{T}^F=(q-\varepsilon)^6$ depending on  $G=E_6^\varepsilon(q)$. For a subset $J$ of $\Phi$, let $\mathbf{L}_J$ be the Levi subgroup of $\mathbf{G}$ corresponding to $J$ and $W_J$ the subgroup of $W$ generated by the reflections corresponding to $J$. Let $N_W(J)=\{w\in W\mid wJ=J\}\cong N_W(W_J)/W_J$, then there is a natural isomorphism $N_W(J)\cong N_\mathbf{G}(\mathbf{L}_J)/\mathbf{L}_J$.

It's well-known that the $G$-conjugacy classes of the Levi subgroups of $G$ are parameterized by pairs $(J,w)$, where $J$ ranges through the $W$-conjugacy classes representatives of subsets of $\Phi$ which forms a proper subdiagram of $\Delta$ and $w$ ranges through the conjugacy classes representatives of $N_W(J)$. Let $n$ be an element of $\mathbf{G}$ mapping on $w$. By Lang's Theorem, there is an element $g\in\mathbf{G}$ such that $n=g^{-1}F(g)$. Then the Levi subgroup of $G$  parameterized by $(J,w)$ is indeed $$(^g\mathbf{L}_J)^F=(\mathbf{L}_J)^{nF}=\{x\in\mathbf{L}_J\mid nF(x)n^{-1}=x\}.$$

In the notations of \cite{levie}, we have $|\mathbf{L}_J^{nF}|=|\mathbf{Z}_J^{nF}||\mathbf{S}_J^{nF}|$, where $\mathbf{S}_J =[\mathbf{L}_J,\mathbf{L}_J]$ is the semisimple component of $\mathbf{L}_J$ and $\mathbf{Z}_J=Z^\circ(\mathbf{L}_J)$. Note that $\mathbf{Z}_J\leq \mathbf{T}$. Since $\mathbf{G}$ is universal, thus by \cite[Proposition 2.6.2]{CFSG}, we have $\mathbf{L}_J^{nF}=\mathbf{T}^{nF}\mathbf{S}_J^{nF}$ and $\mathbf{S}_J^{nF}$ is universal. Moreover, $\mathbf{T}^{nF}$ normalizes each complement of $\mathbf{S}_J^{nF}$ and induces diagonal automorphism on it. Hence we may write $\mathbf{L}_J^{nF}=(\mathbf{Z}_J^{nF}\circ_{\mathbf{Z}_J^{nF}\cap Z(\mathbf{S}_J^{nF})}\mathbf{S}_J^{nF}).Q$, where $Q$ induces outer-diagonal automorphism on $\mathbf{S}_J^{nF}$. Since $|\mathbf{L}_J^{nF}|=|\mathbf{Z}_J^{nF}||\mathbf{S}_J^{nF}|$, it follows that $|Q|=|\mathbf{Z}_J^{nF}\cap Z(\mathbf{S}_J^{nF})|=|(\mathbf{Z}_J\cap Z(\mathbf{S}_J))^{nF}|.$
\begin{lemma}\label{NL}
Keep the notations as above, then $N_G(^g\mathbf{L}_J)/(^g\mathbf{L}_J)^F\cong C_{N_W(J)}(w).$
\end{lemma}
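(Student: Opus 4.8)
The plan is to compare fixed points of the algebraic normalizer $N_{\mathbf{G}}(\mathbf{L}_J)$ under two Steinberg endomorphisms and then to descend to the Weyl group quotient. First I would transport everything back to the standard Levi $\mathbf{L}_J$ by conjugating with $g$. Since $n=g^{-1}F(g)$, so that $F(g)=gn$, the map $x\mapsto gxg^{-1}$ sends an element fixed by the twisted endomorphism $x\mapsto nF(x)n^{-1}$ to an $F$-fixed element: indeed $F(gxg^{-1})=gxg^{-1}$ holds if and only if $nF(x)n^{-1}=x$. As $n$ normalizes $\mathbf{L}_J$, this conjugation restricts to isomorphisms $\mathbf{L}_J^{nF}\xrightarrow{\sim}(^g\mathbf{L}_J)^F$ and $N_{\mathbf{G}}(\mathbf{L}_J)^{nF}\xrightarrow{\sim}N_{\mathbf{G}}(^g\mathbf{L}_J)^F=N_G(^g\mathbf{L}_J)$, compatibly with the inclusions. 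Hence it suffices to prove
$$N_{\mathbf{G}}(\mathbf{L}_J)^{nF}/\mathbf{L}_J^{nF}\cong C_{N_W(J)}(w).$$

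Next I would exploit the short exact sequence $1\to\mathbf{L}_J\to N_{\mathbf{G}}(\mathbf{L}_J)\to N_W(J)\to 1$ furnished by the isomorphism $N_{\mathbf{G}}(\mathbf{L}_J)/\mathbf{L}_J\cong N_W(J)$ recalled above. Since $n\in N_{\mathbf{G}}(\mathbf{L}_J)$, the endomorphism $x\mapsto nF(x)n^{-1}$ (conjugation by $n$ followed by $F$) preserves this sequence, so passing to fixed points gives the exact sequence of nonabelian cohomology $1\to\mathbf{L}_J^{nF}\to N_{\mathbf{G}}(\mathbf{L}_J)^{nF}\to N_W(J)^{nF}\to H^1(nF,\mathbf{L}_J)$. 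Because $\mathbf{L}_J$ is connected, Lang's theorem gives $H^1(nF,\mathbf{L}_J)=1$, whence $N_{\mathbf{G}}(\mathbf{L}_J)^{nF}/\mathbf{L}_J^{nF}\cong N_W(J)^{nF}$, where the superscript denotes fixed points for the induced action of $x\mapsto nF(x)n^{-1}$ on $N_W(J)$, namely conjugation by the image $w$ of $n$ composed with the action of $F$ on $W$.

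Finally I would pin down this induced action. The action of $F$ on $W=N_{\mathbf{G}}(\mathbf{T})/\mathbf{T}$ is the one induced from its action on $X(\mathbf{T})$. With the chosen torus satisfying $\mathbf{T}^F=(q-\varepsilon)^6$, the endomorphism $F$ acts on $X(\mathbf{T})$ as the scalar $\varepsilon q$, so the automorphism it induces on $W$ by conjugation is trivial (and in particular stabilizes the subgroup $N_W(J)$ and acts trivially on it). Consequently the induced endomorphism of $N_W(J)$ is exactly conjugation by $w$, and its fixed-point set is
$$N_W(J)^{nF}=\{x\in N_W(J)\mid wxw^{-1}=x\}=C_{N_W(J)}(w),$$
which completes the identification.

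The step I expect to be the most delicate is the last one: verifying that $F$ acts trivially on $W$ under the stated normalization of $\mathbf{T}$. This is precisely what makes the untwisted centralizer $C_{N_W(J)}(w)$ appear uniformly for both $\varepsilon=+1$ and $\varepsilon=-1$; in the twisted case the diagram-automorphism twist is absorbed into the scalar $-q$ on $X(\mathbf{T})$, which, being central in $\mathrm{GL}(X(\mathbf{T})\otimes\mathbb{R})$, acts trivially by conjugation on $W$. One should also confirm that this normalization is the one underlying the parameterization of Levi subgroups by pairs $(J,w)$ used throughout, so that $w$ here is genuinely an ordinary (rather than $F$-twisted) conjugacy class representative in $N_W(J)$.
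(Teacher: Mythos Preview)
Your argument is correct. The paper does not give a proof at all: it simply cites \cite[Proposition 26.4(b)]{Alggp} (Malle--Testerman), and what you have written is essentially the standard proof of that proposition---transport to the standard Levi, apply Lang's theorem to the short exact sequence $1\to\mathbf{L}_J\to N_{\mathbf{G}}(\mathbf{L}_J)\to N_W(J)\to 1$, and identify the induced endomorphism on $N_W(J)$.

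Your handling of the ``delicate'' step is also correct and worth keeping: the normalization $\mathbf{T}^F=(q-\varepsilon)^6$ forces $F$ to act on $X(\mathbf{T})$ as the scalar $\varepsilon q$, so conjugation by $F$ on $W\subset\mathrm{GL}(X(\mathbf{T})\otimes\mathbb{R})$ is trivial. This is exactly why the paper can parameterize Levi subgroups by ordinary (not $F$-twisted) conjugacy classes of $N_W(J)$ uniformly for both signs $\varepsilon$, and why the centralizer $C_{N_W(J)}(w)$ rather than an $F$-twisted centralizer appears. One minor point you could make explicit for completeness: with this torus, $F$ sends each root subgroup $U_\alpha$ to $U_{\varepsilon\alpha}$, and since $\Phi_J=-\Phi_J$ the standard Levi $\mathbf{L}_J$ is indeed $F$-stable, so the endomorphism $nF$ is well defined on it.
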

\begin{proof}
This is indeed \cite[Proposition 26.4(b)]{Alggp}.
\end{proof}
\section{Unipotent blocks of $E_6^\varepsilon(q)$ with $l\mid q-\varepsilon$}

Suppose $l\mid q-\varepsilon$ in this section.
\subsection{Unipotent blocks}
In this section, we give a description of the unipotent blocks of $G$.
\begin{prop}
Suppose $l\mid q-1$. Then there are 4 unipotent blocks for $E_6(q)$, 2 of them are defect zero, the other two has 25 or 3 unipotent characters respectively. The block with 25 unipotent characters is the principal block.
\end{prop}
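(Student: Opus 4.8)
The plan is to run the unipotent-block classification through the $e$-cuspidal pair correspondence of Theorem~\ref{uniblo}, specialised to $e=1$, which is exactly the hypothesis $l\mid q-1$. Here $\mathbf{G}$ is simply connected of type $E_6$ (split, since $\varepsilon=+1$), so $Z(\mathbf{G})\cong\mu_3$ is prime to $l$ and Theorems~\ref{uniblo}, \ref{weyl} and~\ref{malle} all apply. By Theorem~\ref{uniblo} the unipotent blocks of $G$ are in bijection with the $G$-classes of unipotent $1$-cuspidal pairs $(\mathbf{L},\zeta)$, where $\mathbf{L}$ is a $1$-split Levi subgroup and $\zeta$ is an ordinary cuspidal unipotent character of $\mathbf{L}^F$ (automatically in $\mathcal{E}(\mathbf{L}^F,l')$); and by Theorem~\ref{weyl} the block $b_G(\mathbf{L},\zeta)$ has exactly $|\mathrm{Irr}(W_{\mathbf{G}^F}(\mathbf{L},\zeta))|$ unipotent characters. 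So the problem reduces to enumerating, up to $W$-conjugacy, the sub-Levis of $E_6$ admitting a cuspidal unipotent character, together with their relative Weyl groups.

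First I would use the classification of cuspidal unipotent characters. No type-$A$ Levi carries a nontrivial cuspidal unipotent character, and among the remaining Levi types of $E_6$ only $D_4$ does: in terms of symbols a split group of type $D_n$ has a (unique) cuspidal unipotent character exactly when $n=t^2$ with $t$ even, so $D_4$ qualifies while $D_5$ does not. Finally $\mathbf{G}=E_6$ itself has precisely the two cuspidal unipotent characters $E_6[\theta]$ and $E_6[\theta^2]$. This yields exactly four $G$-classes of unipotent $1$-cuspidal pairs, namely $(\mathbf{T},1)$, the $D_4$-pair $(\mathbf{L}_{D_4},\zeta)$ and the two pairs $(\mathbf{G},E_6[\theta^{\pm1}])$; hence there are exactly four unipotent blocks.

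I would then read off the invariants of each pair. For $(\mathbf{T},1)$ we have $W_{\mathbf{G}^F}(\mathbf{T},1)=W(E_6)$, which has $25$ irreducible characters, so this block has $25$ unipotent characters, and since it contains the trivial character it is the principal block. For each $(\mathbf{G},E_6[\theta^{\pm1}])$ the relative Weyl group is trivial, so the block has a single unipotent character; when the Sylow $l$-subgroup is abelian Theorem~\ref{malle}(1) gives defect group $Z(\mathbf{G})_l^F=1$, so these two blocks have defect zero. For the $D_4$-pair the central torus $Z^\circ(\mathbf{L}_{D_4})$ has rank $2$, so the defect group $Z(\mathbf{L}_{D_4})_l^F$ is nontrivial, and the relative Weyl group $N_W(W_{D_4})/W_{D_4}\cong\mathfrak{S}_3$ fixes $\zeta$, contributing $|\mathrm{Irr}(\mathfrak{S}_3)|=3$ unipotent characters. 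As a check the counts add up to $25+1+1+3=30$, the number of unipotent characters of $E_6(q)$.

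The step needing the most care is the $D_4$-pair: one must confirm that $D_4$ — and neither $D_5$ nor any type-$A$ Levi — is the unique proper sub-Levi with a cuspidal unipotent character, and compute the relative Weyl group $W_{\mathbf{G}^F}(\mathbf{L}_{D_4},\zeta)=N_{\mathbf{G}^F}(\mathbf{L}_{D_4},\zeta)/\mathbf{L}_{D_4}^F$. For the latter I would determine $N_W(W_{D_4})/W_{D_4}$ inside $W(E_6)$, identify it with the order-$6$ triality group $\mathfrak{S}_3$, and note that it stabilises $\zeta$ entirely, since the cuspidal unipotent character of the $D_4$-factor is unique, hence triality-invariant, and $\zeta$ is trivial on the central torus. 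A secondary point is the defect-zero claim when $l=5$: then the Sylow $l$-subgroup is non-abelian, so Theorem~\ref{malle} does not apply and I would instead verify directly that $E_6[\theta^{\pm1}]$ is of $l$-defect zero, which holds because its generic degree is divisible by $\Phi_1^6\Phi_5$, the full $l$-part of $|G|$.
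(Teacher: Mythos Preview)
Your proposal is correct and follows essentially the same route as the paper: both specialise Theorem~\ref{uniblo} to $e=1$, identify the four unipotent $1$-cuspidal pairs $(\mathbf{T},1)$, $(\mathbf{L}_{D_4},\zeta)$, $(\mathbf{G},E_6[\theta^{\pm1}])$, and read off the Harish-Chandra series sizes $25+3+1+1$. The paper simply cites the known series decomposition from \cite{exc}, \cite[Appendix]{lusztig} and \cite[\S 13.9]{Carter} rather than rederiving it from the classification of cuspidal unipotent characters as you do, identifies the principal block via Brauer's Third Main Theorem rather than by observing it contains the trivial character, and checks the defect-zero claim for $E_6[\theta^{\pm1}]$ by a direct degree computation for all $l$ rather than splitting into the abelian-Sylow case (Theorem~\ref{malle}) and $l=5$; but these are cosmetic differences, not a genuinely different argument.
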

\begin{proof}
Note that $e=1$ in this case. In particular, $e$-cuspidal characters are just cuspidal characters. Note that the unipotent characters of finite exceptional groups of Lie type were investigated in \cite{exc}; see also \cite[Appendix]{lusztig} and \cite[\S 13.9]{Carter}. We can see that $E_6(q)$ has 30 unipotent characters. 25 of them lie in the principal series, 3 of them arise from the cuspidal unipotent character of the Levi subgroup $D_4(q)$ and 2 are cuspidal. Hence the first statement  follows from Theorem \ref{uniblo}. The second statement follows easily from Brauer's Third Main Theorem (see \cite[\S 5 Theorem 6.1]{NT}). Finally, by an easy computation, we can see the two cuspidal unipotent characters in \cite[Appendix]{lusztig}  are defect zero.
\end{proof}
\begin{prop}\label{blo}
Suppose $l\mid q+1$. Then there are 4 unipotent blocks for $E_6^{-}(q)$, 2 of them are defect zero, the other two has 25,3 unipotent characters respectively. The block with 25 unipotent characters is the principal block.
\end{prop}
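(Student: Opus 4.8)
The plan is to rerun the argument of the preceding proposition, the only structural change being that $l\mid q+1$ now forces $e=2$ instead of $e=1$. First I would note that, for $l\mid q+1$, the multiplicative order of $q$ modulo $l$ is $e=2$, so the relevant machinery is the $2$-(generalized) Harish-Chandra theory. By Theorem \ref{uniblo} the unipotent blocks of $G={}^2E_6(q)$ are parametrised by the $G$-classes of unipotent $2$-cuspidal pairs $(\mathbf{L},\zeta)$ via $(\mathbf{L},\zeta)\mapsto b_G(\mathbf{L},\zeta)$, and by Theorem \ref{weyl} the number of unipotent characters in $b_G(\mathbf{L},\zeta)$ equals $|\mathrm{Irr}(W_{\mathbf{G}^F}(\mathbf{L},\zeta))|$. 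So the whole statement reduces to enumerating these pairs and their relative Weyl groups.

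I would then list the unipotent $2$-cuspidal pairs of ${}^2E_6(q)$ up to conjugacy, reading the unipotent characters and their $2$-Harish-Chandra series off \cite[Appendix]{lusztig} and \cite[\S 13.9]{Carter}. I expect exactly four classes. The pair $(\mathbf{T},1)$ with $\mathbf{T}^F=(q+1)^6$ is $2$-cuspidal because $\mathbf{T}$ is the minimal $2$-split Levi, and here $W_{\mathbf{G}^F}(\mathbf{T},1)=N_{\mathbf{G}^F}(\mathbf{T})/\mathbf{T}^F$ is the full Weyl group of type $E_6$, which has $25$ irreducible characters; the corresponding block contains the trivial character and is therefore the principal block with $25$ unipotent characters. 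One further pair $(\mathbf{L}_1,\zeta_1)$ has $\mathbf{L}_1$ a proper $2$-split Levi carrying a cuspidal unipotent character $\zeta_1$, with $W_{\mathbf{G}^F}(\mathbf{L}_1,\zeta_1)$ having exactly $3$ irreducible characters, giving the block with $3$ unipotent characters. The remaining two pairs have the shape $(\mathbf{G},\zeta)$ with $\zeta$ a $2$-cuspidal unipotent character of $G$ itself and $W_{\mathbf{G}^F}(\mathbf{G},\zeta)$ trivial, so each spans a single-character block.

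For the defect assertion I would apply Theorem \ref{malle}(1): the defect group of $b_G(\mathbf{L},\zeta)$ is $Z(\mathbf{L})_l^F$, which is trivial exactly when $\mathbf{L}=\mathbf{G}$, since $\mathbf{G}$ is semisimple (so $Z^\circ(\mathbf{G})=1$) and $l\geq 5\nmid|Z(\mathbf{G})|$. Thus the two pairs $(\mathbf{G},\zeta)$ produce the two defect-zero blocks, while $(\mathbf{T},1)$ and $(\mathbf{L}_1,\zeta_1)$ have non-trivial defect $\mathbf{T}_l^F$. I would confirm the defect-zero claim concretely by checking, from the degree formulae in \cite[Appendix]{lusztig}, that the two $2$-cuspidal unipotent character degrees have $l$-part equal to $|G|_l$; alternatively Brauer's Third Main Theorem separates these singletons from the principal block exactly as in the preceding proposition. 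Together with the count $30=25+3+1+1$ of unipotent characters, this yields the claimed description.

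The step I expect to be the main obstacle is the bookkeeping for the twisted group: correctly identifying the proper $2$-split Levi $\mathbf{L}_1$ and computing its relative Weyl group (to pin down the ``$3$''), and confirming that ${}^2E_6(q)$ possesses exactly two $2$-cuspidal unipotent characters, so that there are precisely two defect-zero blocks. The most economical route around this is Ennola duality: the $\Phi_1$-theory of $E_6(q)$ treated in the preceding proposition and the $\Phi_2$-theory of ${}^2E_6(q)$ are Ennola-dual, so the substitution $q\mapsto -q$ transports the entire block distribution --- the number of blocks, their relative Weyl groups, and their defects --- from the already-established $E_6(q)$ case, giving the asserted structure at once.
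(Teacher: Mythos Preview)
Your approach is essentially the same as the paper's: both note that $e=2$, invoke Theorem~\ref{uniblo} to parametrise unipotent blocks by unipotent $2$-cuspidal pairs, and use Theorem~\ref{weyl} to count the unipotent characters in each block via $|\mathrm{Irr}(W_{\mathbf{G}^F}(\mathbf{L},\zeta))|$. The paper is simply more concrete at the second step: it identifies the non-torus pair explicitly as $(\mathbf{L},\zeta)=((q+1)^2.\mathbf{D}_4(q),\phi_{13,02})$ with $W_{\mathbf{G}^F}(\mathbf{L},\zeta)=S_3$, reading this directly from \cite[Table~1]{gen}, and then checks from \cite[Appendix]{lusztig} that exactly two unipotent characters have defect zero, finishing by the count $30=25+3+1+1$. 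Your proposal to reach the same conclusion via Ennola duality (transporting the $\Phi_1$-data for $E_6(q)$ to the $\Phi_2$-data for ${}^2E_6(q)$) is a legitimate and arguably cleaner alternative to the explicit table lookup, though you would need to cite a precise statement making this transfer rigorous.

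One genuine caveat: your use of Theorem~\ref{malle}(1) to read off the defect groups is not fully justified here. That theorem is stated under the hypothesis that the Sylow $l$-subgroup of $G$ is abelian, and in Section~3 (where this proposition sits) we have $l\mid q-\varepsilon$; for $l=5$ the Sylow $5$-subgroup of ${}^2E_6(q)$ is \emph{not} abelian (this is exactly why the non-abelian radical subgroups $R_{19},R_{21}$ appear later in the section). The paper avoids this by arguing the defect-zero claim directly from the degree formulae in \cite[Appendix]{lusztig}, which is precisely your stated fallback. So your proof goes through, but you should drop the appeal to Theorem~\ref{malle}(1) and rely on the degree computation instead.
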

\begin{proof}
We have $e=2$ in this case. Since the maximal torus $\mathbf{T}$ with  $\mathbf{T}^F=(q+1)^6$ is a 2-split Levi subgroup of $E^-_6(q)$, we have $(\mathbf{T},1)$ is a 2-cuspidal pair, where 1 is the trivial character of $\mathbf{T}^F$. For this pair, we have $W_{\mathbf{G}^F}(\mathbf{T}, 1)=\mathrm{PSp}_4(3).2$ is the Weyl group of $E_6(q)$ with 25 irreducible characters by \cite[p.27]{Atlas}. So there are 25 unipotent characters in the unipotent block of $E^-_6(q)$ corresponding to $(\mathbf{T},1)$ by Theorem \ref{uniblo} and Theorem \ref{weyl}. By \cite[Table 1]{gen}, $$(\mathbf{L},\zeta)=((q+1)^2.\mathbf{D}_4(q),\phi_{13,02})$$ is another 2-split cuspidal pair with $W_{\mathbf{G}^F}(\mathbf{L},\zeta)=S_3$. So there are 3 unipotent characters in the unipotent block of $E^-_6(q)$ corresponding to such pair by a similar argument as the previous case. Moreover, there are two unipotent characters of defect zero by \cite[Appendix]{lusztig}. Since there are 30 unipotent characters in total, we have exhausted them. This completes our proof.
\end{proof}
In conclusion, there are two unipotent blocks of $G$ with positive defect. The principal block, which we denote by $B_1$, has 25 unipotent characters; the other block has 3 unipotent characters and we denote it by $B_2$.
\subsection{Weights}

In this section, we determine the weights for unipotent blocks of $G$. Let $B$ be a block of $G$. By \cite[p.3]{WeiSym}, we can find the $B$-weights of $G$ as follows: Firstly, determine all radical subgroups of $G$ up to conjugacy. Secondly, for each radical subgroup $R$, find all blocks $b$ of $C_G(R)R$ with defect group $R$ such that $B=b^G$. Thirdly, let $\theta$ be the canonical character of $b$ and $N(\theta)$ the inertial group of $\theta$ in $N_G(R)$. For each $\psi$ in $\mathrm{Irr}(N(\theta)\mid\theta)$, compute $d(\psi)$ which is defined by $\psi(1)=d(\psi){\theta(1)}$. Then each $\psi$ such that $d(\psi)_r=|N(\theta):C_G(R)R|_r$ gives rise to a $B$-weight $(R,\psi^{N_G(R)})$ of $G$.

The non-trivial radical $l$-subgroups of $G$ and their local structure up to conjugation have been classified in \cite[Theorem A]{RadE6}, see \cite[Table 2 and 4]{RadE6}. When $l=5$, they are denoted by $R_1,\cdots,R_{22};$ when $l\geq 7$, they are denoted by $R_1,\cdots,R_{16}.$ Note that $R_1,\cdots,R_{18}$ are abelian.

\begin{lemma}
For any $1\leq i\leq 16$, $C_\mathbf{G}(R_i)$ is an $e$-split Levi subgroup of $\mathbf{G}.$
\end{lemma}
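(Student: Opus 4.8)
The plan is to split according to whether $l\geq 7$ or $l=5$, since the Sylow $l$-subgroup of $G$ is abelian only in the former range. Recall that in this section $l\mid q-\varepsilon$, so $e=1$ when $\varepsilon=+1$ and $e=2$ when $\varepsilon=-1$; recall also that $R_1,\dots,R_{16}$ are the radical subgroups occurring for every $l\geq 5$, whereas $R_{17},\dots,R_{22}$ are special to $l=5$. In both ranges the underlying idea is the same: for each $i$ with $1\leq i\leq 16$ I would produce an $e$-split Levi subgroup $\mathbf{L}_i$ of $\mathbf{G}$ such that $R_i=Z(\mathbf{L}_i)_l^F$, read off from the explicit local data of \cite[Table 2 and 4]{RadE6}, and then identify $C_{\mathbf{G}}(R_i)$ with $\mathbf{L}_i$.

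For $l\geq 7$ the conclusion is immediate. Since $|W(E_6)|=2^7\cdot 3^4\cdot 5$, we have $l\nmid|W(E_6)|$, and together with $l\nmid 3=|Z(\mathbf{G})/Z^\circ(\mathbf{G})|$ this forces the Sylow $l$-subgroups of $G$ to be abelian (contained in a maximal torus). Theorem \ref{levirad}(2) then applies verbatim: the map $R\mapsto C_{\mathbf{G}}(R)$ is a bijection from the radical $l$-subgroups of $G$ onto the $e$-split Levi subgroups, so in particular $C_{\mathbf{G}}(R_i)$ is an $e$-split Levi subgroup for each $1\leq i\leq 16$.

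The case $l=5$ is the heart of the matter, because then $5\mid|W(E_6)|$, the Sylow $5$-subgroups of $G$ are non-abelian, and Theorem \ref{levirad} is no longer available. Here I would argue by hand. Using the generators and local structure listed in \cite[Table 2 and 4]{RadE6}, together with the parametrization of Levi subgroups by pairs $(J,w)$ recalled above, I would match each $R_i$ $(1\leq i\leq 16)$ with the $e$-split Levi $\mathbf{L}_i$ for which $R_i=Z(\mathbf{L}_i)_5^F$. Granting this identification, \cite[Proposition 2.2]{uniord} — already used in the proof of Proposition \ref{unicor}, and valid here since $l\geq 5$ and $\mathbf{G}$ has no component of type $E_8$ — yields $C_{\mathbf{G}}^\circ(R_i)=C_{\mathbf{G}}^\circ(Z(\mathbf{L}_i)_5^F)=\mathbf{L}_i$. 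Finally, because $R_i\leq Z(\mathbf{L}_i)$ is contained in a maximal torus of $\mathbf{G}$ and $\mathbf{G}$ is simply connected, $C_{\mathbf{G}}(R_i)$ is connected, whence $C_{\mathbf{G}}(R_i)=C_{\mathbf{G}}^\circ(R_i)=\mathbf{L}_i$ is an $e$-split Levi subgroup.

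The main obstacle is exactly the identification $R_i=Z(\mathbf{L}_i)_5^F$ when $l=5$: one must verify group by group that each of $R_1,\dots,R_{16}$ really is the $5$-part of the centre of an $e$-split Levi, thereby separating them from the non-toral radical subgroups $R_{17},\dots,R_{22}$, whose centralizers fail to be Levi. This is where the detailed tables of \cite{RadE6} and the structural description of Levi subgroups recalled above carry the weight of the argument; the two supporting ingredients — the centralizer formula of \cite{uniord} and the connectedness of centralizers of toral subgroups in the simply connected group $\mathbf{G}$ — are then routine.
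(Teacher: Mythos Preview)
Your proposal is correct, and for $l\geq 7$ it coincides with the paper's proof (both invoke the abelian-Sylow result, your Theorem \ref{levirad}, which is \cite[Proposition 2.3]{abe}).

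For $l=5$ the paper takes a shorter route than you do. Rather than matching each $R_i$ individually to an $e$-split Levi via the tables and then invoking \cite[Proposition 2.2]{uniord}, the paper simply observes two facts: (i) the groups $R_1,\dots,R_{16}$ have \emph{the same abstract structure} for $l=5$ as for $l\geq 7$, and (ii) each of them sits inside the maximally split torus $(q-\varepsilon)^6$. Given (i) and (ii), the \emph{proof} of \cite[Proposition 2.3]{abe} goes through verbatim for these particular radical subgroups, even though its hypothesis (abelian Sylow) fails globally. In other words, the paper transports the argument rather than the statement: the abelian-Sylow assumption in \cite{abe} is used only to guarantee that every radical subgroup is toral of the right shape, and for $R_1,\dots,R_{16}$ that is already provided by \cite{RadE6}. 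Your case-by-case identification would of course reprove exactly this, and your supporting ingredients (connectedness of centralizers of toral subgroups in the simply connected $\mathbf{G}$, and the centralizer formula) are the same ones used inside the proof of \cite[Proposition 2.3]{abe}; the paper's formulation just avoids re-deriving them for each $i$.
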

\begin{proof}

Note that $$|G|=q^{36}(q^2-1)(q^5-\varepsilon)(q^6-1)(q^8-1)(q^9-\varepsilon)(q^{12}-1).$$
If $l\geq 7$, this is a direct consequence of \cite[Proposition 2.3]{abe}. For $l=5$, note that $R_i$ has the same structure as the case $l\geq 7$ and $R_i\leq (q-\varepsilon)^6$. So the proof of  \cite[Proposition 2.3]{abe} still holds.
\end{proof}
With Proposition \ref{unicor}, we see that in order to classify unipotent weights afforded by $R_1,\cdots,R_{16}$, it suffices to consider unipotent blocks of $C_G(R_1),\cdots,C_G(R_{16}).$ By \cite[Table 2]{RadE6}, for $1\leq i\leq 16$, $C_G(R_i)/R_i$ has the form $(A\circ H).Q$, where $A$ is an abelian $l'$-group, $H$ is a finite group of Lie type and $Q$ acts on it via diagonal automorphism. Hence in order to classify the defect zero unipotent blocks of $C_G(R_i)/R_i$, it suffices to classify the defect zero unipotent blocks of $H.Q$. Since every unipotent block contains a unipotent character, it suffices to classify the defect zero unipotent characters of $H.Q$.

According to \cite[Proposition 7.10]{DL}, the unipotent characters of a finite group of Lie type can be viewed as the unipotent characters of its adjoint version. Note that the unipotent characters of classical groups are parameterized by partitions or Lusztig symbols and their degrees are known; see \cite[Appendix]{lusztig} or \cite[\S 13.8]{Carter}, which are originally from \cite[Theorem 8.2]{unicla}, \cite{unitary} and \cite{GL}.

By \cite[Table 2]{RadE6}, $$\nu(|H.Q|)=\nu(|\mathrm{Spin}_{2n}^{\eta}(q)|)\mbox{ or } \nu(|\mathrm{SL}_{m}^{\eta}(q)|)$$ for some integer $m,n$ and $\eta\in\{\pm 1\}$. Thus we only need to consider unipotent defect zero characters of these groups.

\begin{lemma}\label{Sl}
Let $n\geq 2$ be a positive integer, $l$ a prime with $l\mid q-\varepsilon$ and $l\geq n$. Then $\mathrm{SL}_n^\varepsilon(q)$ has no defect zero unipotent characters.
\end{lemma}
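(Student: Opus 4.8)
The plan is to reduce the statement to a short comparison of $l$-adic valuations of character degrees via the $q$-hook-length formula. First I would recall that, by \cite[Proposition 7.10]{DL} together with the discussion preceding the lemma, the unipotent characters of $\mathrm{SL}_n^\varepsilon(q)$ have the same degrees as those of its adjoint group, and these are the unipotent characters of $\mathrm{GL}_n^\varepsilon(q)$; they are parameterised by partitions $\lambda\vdash n$, with
$$\chi_\lambda(1)=q^{n(\lambda)}\,\frac{\prod_{i=1}^{n}(q^{i}-\varepsilon^{i})}{\prod_{(i,j)\in\lambda}(q^{h(i,j)}-\varepsilon^{h(i,j)})},$$
where $h(i,j)$ is the hook length of the cell $(i,j)$ (see \cite[Appendix]{lusztig}, \cite[\S 13.8]{Carter}); the unitary case arises from the linear one by Ennola duality $q\mapsto-q$, which turns each factor $q^{m}-1$ into $\pm(q^{m}-(-1)^{m})$. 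Since $\chi_\lambda$ has defect zero exactly when $\nu(\chi_\lambda(1))=\nu(|\mathrm{SL}_n^\varepsilon(q)|)$, the whole question reduces to computing these two valuations.

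Next I would compute both sides using the elementary lifting-the-exponent identity, valid for the odd prime $l$ dividing $q-\varepsilon$: for every $m\geq 1$,
$$\nu(q^{m}-\varepsilon^{m})=\nu(q-\varepsilon)+\nu(m).$$
For $\varepsilon=1$ this is standard; for $\varepsilon=-1$ one checks the two parities of $m$ separately, using $\nu(q-\varepsilon)=\nu(q+1)$ and $\nu(q^{2}-1)=\nu(q+1)$. Because $l\nmid q$ the factor $q^{n(\lambda)}$ contributes nothing; the numerator product has exactly $n$ factors and the hook product ranges over the $n$ cells of $\lambda$, so applying the identity to each factor the $n$ copies of $\nu(q-\varepsilon)$ in the numerator cancel against the $n$ copies in the denominator, leaving
$$\nu(\chi_\lambda(1))=\sum_{i=1}^{n}\nu(i)-\sum_{(i,j)\in\lambda}\nu(h(i,j)).$$
On the other hand, from $|\mathrm{SL}_n^\varepsilon(q)|=q^{\binom{n}{2}}\prod_{i=2}^{n}(q^{i}-\varepsilon^{i})$ the same identity gives $\nu(|\mathrm{SL}_n^\varepsilon(q)|)=(n-1)\nu(q-\varepsilon)+\sum_{i=1}^{n}\nu(i)$.

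Subtracting, the defect of $\chi_\lambda$ is
$$\nu(|\mathrm{SL}_n^\varepsilon(q)|)-\nu(\chi_\lambda(1))=(n-1)\,\nu(q-\varepsilon)+\sum_{(i,j)\in\lambda}\nu(h(i,j))\;\geq\;(n-1)\,\nu(q-\varepsilon)\;\geq\;n-1\;\geq\;1,$$
using $n\geq 2$ and $\nu(q-\varepsilon)\geq 1$. Thus every unipotent character of $\mathrm{SL}_n^\varepsilon(q)$ has strictly positive defect, proving the lemma. Here the hypothesis $l\geq n$ serves only to keep the index and hook terms under control (each $\nu(i)$ and $\nu(h(i,j))$ vanishes unless $i$ or $h(i,j)$ equals $l=n$), but the displayed inequality already forces positivity without it. I do not anticipate a genuine obstacle: the only points requiring care are (i) the identification of the unipotent characters and degrees of $\mathrm{SL}_n^\varepsilon(q)$ with the partition-indexed degrees of $\mathrm{GL}_n^\varepsilon(q)$, and (ii) verifying the uniform valuation identity in the unitary case, where the sign produced by Ennola duality must be tracked so that it does not affect the $l$-part. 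Once these are settled, the conclusion is immediate from the last inequality.
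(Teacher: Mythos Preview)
Your proposal is correct and follows the same overall strategy as the paper: compute the $l$-defect of each unipotent character directly from the known degree formula and show it is strictly positive. The execution differs slightly. The paper works with the $\beta$-set (Vandermonde) form of the degree, bounds each factor $q^{\lambda_i-\lambda_{i'}}-\varepsilon^{i+i'}$ by observing $\lambda_i-\lambda_{i'}<l$ (this is where the hypothesis $l\ge n$ enters), and arrives at the same lower bound $(n-1)\nu(q-\varepsilon)$ for the defect. You instead use the hook-length form of the degree together with the lifting-the-exponent identity $\nu(q^{m}-\varepsilon^{m})=\nu(q-\varepsilon)+\nu(m)$, which makes the cancellation of the $n$ copies of $\nu(q-\varepsilon)$ in numerator and denominator immediate and yields the defect as $(n-1)\nu(q-\varepsilon)+\sum_{(i,j)}\nu(h(i,j))$. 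This is a little cleaner and, as you correctly observe, shows that the hypothesis $l\ge n$ is not actually needed for the conclusion; the paper's argument uses $l\ge n$ only to control the Vandermonde factors, a step your formulation bypasses entirely.
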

\begin{proof}
Let $\chi^\alpha$ be the unipotent character of $\mathrm{SL}_n^\varepsilon(q)$ corresponding to the partition $\alpha=(\alpha_1,\cdots,\alpha_m)$ of $n$. Let $\lambda_i=\alpha_i+i-1$. By \cite[Appendix]{lusztig}, $\chi^\alpha$ has degree
$$\frac{(q-\varepsilon)\cdots(q^n-\varepsilon^n)\prod_{i'<i}(q^{\lambda_i}-\varepsilon^{i+i'}q^{\lambda_{i'}})}
{q^{\binom{m-1}{2}+\binom{m-2}{2}+\cdots}\prod_i\prod_{k=1}^{\lambda_i}(q^k-\varepsilon^k)}.$$
Thus
$$\nu(|\mathrm{SL}_n^\varepsilon(q)|)-\nu(\chi^\alpha(1))=\nu\left(\prod_i\prod_{k=1}^{\lambda_i}(q^k-\varepsilon^k)\right)-
\nu\left(\prod_{i'<i}(q^{\lambda_i-\lambda_{i'}}-\varepsilon^{i+i'})\right)-\nu(q-\varepsilon).$$
Note that $0\leq\lambda_i-\lambda_{i'}<l$ for all $i'<i$. Hence $$\nu(|\mathrm{SL}_n^\varepsilon(q)|)-\nu(\chi^\alpha(1))\geq\left(\sum_{i}\lambda_i-\binom{m}{2}-1\right)\nu(q-\varepsilon)=(n-1)\nu(q-\varepsilon)>0.$$
This completes the proof.
\end{proof}
\begin{lemma}\label{Spin}
Let $n$ be a positive integer, $l$ an odd prime with $l\mid q-1$ and $l\geq n+1$. Then $\mathrm{Spin}_{2n}^{+}(q)$ has no defect zero unipotent characters unless $n$ is an even square. Moreover, if $n$ is an even square, then $\mathrm{Spin}_{2n}^{+}(q)$  has a unique defect zero unipotent character.
\end{lemma}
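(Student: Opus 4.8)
The plan is to follow the strategy of Lemma~\ref{Sl}: translate ``$l$-defect zero'' into an equality of $l$-adic valuations, compute $\nu(|\mathrm{Spin}_{2n}^+(q)|)$, and then read off $\nu(\chi(1))$ from the explicit symbol degree formula. First I would record the $l$-part of the group order. Since $l\mid q-1$ we have $e=1$, and the lifting-the-exponent lemma gives $\nu(q^k-1)=\nu(q-1)+\nu(k)$, while $\nu(q^k+1)=0$ because $q^k+1\equiv 2\pmod l$ with $l$ odd. From $|\mathrm{Spin}_{2n}^+(q)|_{p'}=(q^n-1)\prod_{i=1}^{n-1}(q^{2i}-1)$ and the hypothesis $l\ge n+1$ (so $n<l$ and every $2i<2l$ is prime to $l$, as $l$ is odd), I obtain $\nu(|\mathrm{Spin}_{2n}^+(q)|)=n\,\nu(q-1)$. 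Thus a unipotent character $\chi$ has $l$-defect zero exactly when $\nu(\chi(1))=n\,\nu(q-1)$. By \cite[Proposition 7.10]{DL} it suffices to argue in the adjoint group $\mathrm{SO}_{2n}^+$, whose unipotent characters $\chi_\Lambda$ are parameterized by symbols $\Lambda=\binom{\lambda_1<\cdots<\lambda_a}{\mu_1<\cdots<\mu_b}$ of rank $n$ and defect $a-b\equiv 0\pmod 4$, degenerate symbols (two equal rows) splitting into two characters.

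Next I would feed the degree formula of \cite[Appendix]{lusztig} (see also \cite[\S 13.8]{Carter}) into an LTE computation, taking for each symbol its reduced representative so that all entries are bounded by $n$, hence strictly below $l$. The degree is $|\mathrm{Spin}_{2n}^+(q)|_{p'}$ times a ratio whose numerator consists of the intra-row factors $q^{\lambda_j}-q^{\lambda_i}$, $q^{\mu_j}-q^{\mu_i}$ and the inter-row factors $q^{\lambda_i}+q^{\mu_j}$, and whose denominator is $\prod_i\prod_{k=1}^{\lambda_i}(q^{2k}-1)\prod_j\prod_{k=1}^{\mu_j}(q^{2k}-1)$. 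Because every entry is $<l$, each factor $q^d-1$ with $0<d<l$ contributes exactly $\nu(q-1)$, each factor $q^d+1$ contributes $0$, and no factorial terms $\nu(\lambda_i!)$ survive. Collecting valuations should yield
$$\nu\!\left(|\mathrm{Spin}_{2n}^+(q)|\right)-\nu(\chi_\Lambda(1))=\Big(\textstyle\sum_i\lambda_i+\sum_j\mu_j-\binom{a}{2}-\binom{b}{2}\Big)\,\nu(q-1).$$
Since $\lambda_1<\cdots<\lambda_a$ and $\mu_1<\cdots<\mu_b$ are distinct non-negative integers, we have $\sum_i\lambda_i\ge\binom a2$ and $\sum_j\mu_j\ge\binom b2$, so this quantity is $\ge 0$ and vanishes precisely when both rows are initial segments $\{0,1,\dots,a-1\}$ and $\{0,1,\dots,b-1\}$; equivalently, when $\Lambda$ is a $1$-core.

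Finally I would translate the $1$-core condition through the rank formula $n=\sum_i\lambda_i+\sum_j\mu_j-m(m-1)$ with $2m=a+b$. The reduced $1$-core symbol has empty bottom row ($b=0$, $a=2m$), equals $\binom{0\;1\;\cdots\;2m-1}{\ }$, and has rank $m^2$; its defect $a-b=2m$ lies in $4\mathbb{Z}$ exactly when $m$ is even. Hence a defect-zero unipotent character exists if and only if $n=m^2$ with $m$ even, that is, if and only if $n$ is an even square, in which case the symbol, and therefore the character, is uniquely determined and non-degenerate. The parity condition $a-b\equiv 2\pmod 4$ attached to $^2D_n$ would analogously single out odd squares, which serves as a consistency check.

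The hard part, I expect, is the bookkeeping that makes the LTE step genuinely clean: one must verify that every entry of a reduced rank-$n$ symbol is at most $n$ (so that all exponents and differences appearing in the degree are $<l$, whence no $\nu(k)$ or $\nu(\lambda_i!)$ contamination occurs), and this is exactly where the hypothesis $l\ge n+1$ is used at full strength. Keeping the defect-modulo-$4$ distinction between $D_n^+$ and $^2D_n$ straight, and correctly handling the degenerate-symbol convention, are the remaining points that need care; the valuation inequality itself is then a direct imitation of Lemma~\ref{Sl}.
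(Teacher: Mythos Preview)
Your proposal is correct and follows essentially the same route as the paper: compute $\nu(|\mathrm{Spin}_{2n}^+(q)|)-\nu(\chi_\Lambda(1))=\bigl(\sum_i\lambda_i+\sum_j\mu_j-\binom{a}{2}-\binom{b}{2}\bigr)\nu(q-1)$ from the symbol degree formula, using $\lambda_i,\mu_j\le n<l$ to kill all higher LTE terms, and then characterize the equality case. The only cosmetic difference is that the paper rewrites the right-hand side via the identity $\binom{a}{2}+\binom{b}{2}=\bigl\lfloor\bigl(\tfrac{a+b-1}{2}\bigr)^2\bigr\rfloor+\bigl(\tfrac{a-b}{2}\bigr)^2$ to obtain $\bigl(n-\bigl(\tfrac{a-b}{2}\bigr)^2\bigr)\nu(q-1)$ directly, whereas you reach the same endpoint through the $1$-core description and the rank formula.
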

\begin{proof}
Let $\chi^\alpha$ be the unipotent character of $\mathrm{Spin}_{2n}^{+}(q)$ corresponding to the Lusztig symbol $$\alpha=\binom{\lambda_1,\cdots,\lambda_a}{\mu_1,\cdots,\mu_b}$$ with $0\leq \lambda_1<\lambda_2<\cdots<\lambda_a,0\leq\mu_1<\mu_2<\cdots<\mu_b$, $\lambda_1,\mu_1$ are not both zero, $4\mid a-b$ and $$\sum\lambda_i+\sum\mu_j-\left[\left(\frac{a+b-1}{2}\right)^2\right]=n.$$ By \cite[Appendix]{lusztig}, $\chi^\alpha$ has degree
$$\frac{(q^2-1)\cdots(q^{2n-2}-1)(q^n-1)\prod_{i'<i}(q^{\lambda_i}-q^{\lambda_{i'}})\prod_{j'<j}(q^{\mu_j}-q^{\mu_{j'}})\prod_{i,j}(q^{\lambda_i}+q^{\mu_{j}})}
{2^cq^{\binom{a+b-2}{2}+\binom{a+b-4}{2}+\cdots}\prod_i\prod_{k=1}^{\lambda_i}(q^{2k}-1)\prod_j\prod_{k=1}^{\mu_j}(q^{2k}-1)}.$$
An easy computation shows $\mu_j,\lambda_i\leq n<l$, so $$\nu(|\mathrm{Spin}_{2n}^{+}(q)|)-\nu(\chi^\alpha(1))=\left(\sum_{i}\lambda_i+\sum_{j}\mu_j-\binom{a}{2}-\binom{b}{2}\right)\nu(q-\varepsilon).$$
Since $$\binom{a}{2}+\binom{b}{2}=\left[\left(\frac{a+b-1}{2}\right)^2\right]+\left[\left(\frac{a-b}{2}\right)^2\right],$$
it follows that
$$\nu(|\mathrm{Spin}_{2n}^{+}(q)|)-\nu(\chi^\alpha(1))=\left(n-\left(\frac{a-b}{2}\right)^2\right)\nu(q-\varepsilon).$$
Thus $\chi^\alpha$ can't be defect zero unless $$n=\left(\frac{a-b}{2}\right)^2.$$ If this holds, then $$\binom{a}{2}+\binom{b}{2}=\sum\lambda_i+\sum\mu_j.$$
Since $\lambda_1,\mu_1$ are not both zero, we may assume $b=0$, $a^2=4n$ and $\lambda_i=i$. Recall that $4\mid a-b$, hence the result follows.
\end{proof}
\begin{prop}
For $i\neq 6,16$, $C_G(R_i)$ has no unipotent block with defect group $R_i$. For $i=6,16$, $C_G(R_i)$ has a unique unipotent block with defect group $R_i$.
\end{prop}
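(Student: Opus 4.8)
The plan is to convert the statement into a count of defect-zero unipotent characters and then read it off from Lemmas \ref{Sl} and \ref{Spin}. Since each $R_i$ is abelian it is central in $C_G(R_i)$, hence contained in every defect group of that group; consequently a block $b$ of $C_G(R_i)$ has defect group exactly $R_i$ if and only if the block of $C_G(R_i)/R_i$ that $b$ dominates is of defect zero, that is, consists of a single defect-zero character. As every unipotent block contains a unipotent character, the number of unipotent blocks of $C_G(R_i)$ with defect group $R_i$ equals the number of defect-zero unipotent characters of $C_G(R_i)/R_i$; by the reductions recorded above (discarding the abelian $l'$-factor $A$, passing to the adjoint group via \cite[Proposition 7.10]{DL}, and using $\nu(|H.Q|)=\nu(|\mathrm{Spin}_{2n}^{\eta}(q)|)$ or $\nu(|\mathrm{SL}_m^{\eta}(q)|)$) this is the number of defect-zero unipotent characters of the Lie-type part $H.Q$.

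I would then dispatch the sixteen cases by the type of the semisimple part of $C_\mathbf{G}(R_i)$, as recorded in \cite[Table 2]{RadE6}. If that semisimple part has any factor of type $A$, then $H.Q$ has a factor $\mathrm{SL}_m^{\varepsilon}(q)$ with $m\ge 2$, which by Lemma \ref{Sl} has no defect-zero unipotent character; since a unipotent character of a direct product is of defect zero only if each of its factors is, $C_G(R_i)$ then has no unipotent block with defect group $R_i$. In $E_6^{\varepsilon}$ the only genuinely orthogonal Levis are those of type $D_4$ and $D_5$, so the remaining possibilities are $H$ of type $D_n$ with $n\in\{4,5\}$, or $H$ trivial. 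For $H$ of type $D_n$ I apply Lemma \ref{Spin}: since $n\le 5$ the only even square is $n=4$, so type $D_4$ yields a unique (cuspidal) defect-zero unipotent character and type $D_5$ yields none. Finally $H$ trivial means $C_\mathbf{G}(R_i)=\mathbf{T}$ and $R_i$ is the full Sylow $l$-subgroup: here $C_G(R_i)/R_i$ is an $l'$-group whose only unipotent character, the trivial one, is of defect zero, giving a unique unipotent block with defect group $R_i$ (the one inducing the principal block $B_1$). Thus exactly two radical subgroups survive, the $D_4$-type Levi (attached to $B_2$) and the minimal torus (attached to $B_1$), each contributing a unique block; matching these against \cite[Table 2]{RadE6} names them $R_6$ and $R_{16}$.

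The work is in the case analysis rather than in any isolated hard step, and I expect three points to require care. First, one must correctly extract the type, and in particular the sign, of every factor of $H$ from \cite[Table 2]{RadE6}, stripping the central factor $A$ and the diagonal part $Q$ (both $l'$ for $l\ge 5$) without altering the defect-zero count. Second is the twisted case $\varepsilon=-1$, where $l\mid q+1$ and $e=2$, so that Lemma \ref{Spin} as stated (for $l\mid q-1$) does not apply directly to the $D$-type factors; I would resolve this by Ennola duality, transporting the symbol computation of Lemma \ref{Spin} to the split group via $q\mapsto -q$, so that the conclusion ``$n$ an even square'' persists and $D_4$ remains the only $D$-type exception. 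Third is the boundary prime $l=5$ with a $D_5$-factor, where the hypothesis $l\ge n+1$ of Lemma \ref{Spin} just fails; this needs a separate verification (absorbed into the analysis of the non-abelian radical subgroups $R_{17},\dots,R_{22}$) but produces no new defect-zero unipotent character.
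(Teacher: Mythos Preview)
Your approach is essentially the paper's: reduce to counting defect-zero unipotent characters of the Lie-type factor of $C_G(R_i)/R_i$ and then invoke Lemmas~\ref{Sl} and~\ref{Spin}. The differences are in how the residual cases are dispatched.

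For $\varepsilon=-1$ the paper does not use Ennola duality; it simply computes directly. Concretely, $C_G(R_1)$ involves $\mathrm{Spin}_{10}^{-}(q)$ (twisted type, not $+$), and one checks by hand from the symbol degree formula that it has no defect-zero unipotent character; $C_G(R_6)$ involves $\mathrm{Spin}_8^{+}(q)$ even when $\varepsilon=-1$, and one finds by hand the unique defect-zero unipotent character, labeled by $\binom{1,3}{0,2}$ with degree $\tfrac12 q^3(q+1)^3(q^3+1)$. Your Ennola idea is viable, but note that for $R_6$ the group is already split; what you are really using is that $D_4$ is Ennola self-dual (the order polynomial and the degree polynomials are invariant under $q\mapsto -q$ up to sign), so the valuation computation of Lemma~\ref{Spin} transports verbatim. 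For $R_1$ with $\varepsilon=-1$ you need the genuine Ennola bijection between the unipotent characters of $\mathrm{Spin}_{10}^{-}(q)$ and those of $\mathrm{Spin}_{10}^{+}(q)$. Either way, spelling this out carefully is about the same amount of work as the paper's direct check.

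One small correction: the boundary $l=5$ with a $D_5$-factor concerns $R_1$, which is among $R_1,\ldots,R_{16}$ and so falls squarely inside the proposition being proved; it is not ``absorbed into the analysis of $R_{17},\ldots,R_{22}$''. You do need a separate (short) verification that $\mathrm{Spin}_{10}^{\varepsilon}(q)$ has no defect-zero unipotent character when $l=5\mid q-\varepsilon$, since the hypothesis $l\ge n+1$ of Lemma~\ref{Spin} just fails there. The paper's proof is in fact slightly elliptical on this point as well; a direct inspection of the symbol degrees suffices.
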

\begin{proof}
If $\varepsilon=1$, then the result follows from Lemma \ref{Sl} and Lemma \ref{Spin}.

Now suppose $\varepsilon=-1$. According to Lemma \ref{Sl}, $C_G(R_i)$ has no unipotent block with defect group $R_i$ for $i\neq 1,6,16$. The result for $R_{16}$ is obvious.
Note that $$C_G(R_1)=((q-\varepsilon)\circ_{2_\varepsilon}\mathrm{Spin}_{10}^\varepsilon(q)).2_{\varepsilon}.$$ A direct computation shows that $\mathrm{Spin}_{10}^\varepsilon(q)$ has no defect zero unipotent block. For $$C_G(R_6)=((q-\varepsilon)^2\circ_{(2_\varepsilon)^2}(\mathrm{Spin}_{8}^+(q)).(2_{\varepsilon})^2,$$ by a direct computation, we can see $\mathrm{Spin}_{8}^+(q)$  has a unique defect zero unipotent block, which is labeled by $$\binom{1,3}{0,2}.$$ The character in this block has degree $$\frac{1}{2}q^3(q+1)^3(q^3+1).$$
\end{proof}
Let $b_1$ be the unique unipotent block of $C_G(R_6)$ with defect group $R_6$. Let $b_2$ be the unique unipotent block of $C_G(R_{16})$ with defect group $R_{16}$. Let $\theta_1,\theta_2$ be the canonical character of $b_1$ and $b_2$ respectively. Obviously, $b_2$ is the principal block and $\theta_2$ is the trivial character.

With the above preparations, we are now ready to compute the unipotent weight of $G$.
\begin{lemma}\label{s3}
Let $H$ be a finite group. Suppose $K$ is a normal subgroup of $H$ with $H/K\cong S_3$ and $\chi\in\mathrm{Irr}(K)$ is $H$-invariant. Then $\chi$ is extensible to $H$ and $|\mathrm{Irr}(H\mid\chi)|=3.$
\end{lemma}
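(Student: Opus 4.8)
The plan is to split the statement into two parts: first prove that the $H$-invariant character $\chi$ extends to an irreducible character $\hat\chi$ of $H$, and then deduce the count $|\mathrm{Irr}(H\mid\chi)|=3$ from Gallagher's theorem. Once an extension is in hand the counting is essentially automatic, so the whole difficulty is concentrated in the extensibility claim, which is non-trivial precisely because $H/K\cong S_3$ is non-cyclic (the elementary cyclic-quotient extension lemma does not apply on the nose).

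For the extension step I would argue one prime at a time. Since $\chi$ is $H$-invariant, it is invariant in every intermediate group $K\le L\le H$. By the standard local criterion for extendibility (Isaacs, \emph{Character Theory of Finite Groups}, Theorem~11.31), $\chi$ extends to $H$ as soon as it extends to the preimage in $H$ of a Sylow $p$-subgroup of $H/K$ for each prime $p$. For $H/K\cong S_3$ these Sylow subgroups are $C_2$ and $C_3$, both cyclic; and an invariant irreducible character always extends to an overgroup with cyclic quotient (Isaacs, Corollary~11.22). Hence $\chi$ extends to each of these two preimages, and therefore to $H$. A slightly more conceptual alternative is to observe that the obstruction to extending an invariant $\chi$ is a class in $H^2(H/K,\mathbb{C}^\times)$ with trivial coefficients, which equals the Schur multiplier $M(S_3)$; since $M(S_3)=1$ this obstruction vanishes and $\chi$ extends.

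Granting an extension $\hat\chi\in\mathrm{Irr}(H)$, Gallagher's theorem (Isaacs, Corollary~6.17) states that $\beta\mapsto\beta\hat\chi$, where $\beta$ runs over $\mathrm{Irr}(H/K)$ inflated to $H$, is a bijection onto $\mathrm{Irr}(H\mid\chi)$. Consequently $|\mathrm{Irr}(H\mid\chi)|=|\mathrm{Irr}(H/K)|=|\mathrm{Irr}(S_3)|=3$, the last equality because $S_3$ has exactly three irreducible characters. The main obstacle, as noted, is the extension step: one must either reduce to the cyclic Sylow quotients $C_2$ and $C_3$ or invoke the vanishing of the Schur multiplier of $S_3$, whereas the remaining passage through Gallagher and the count of $\mathrm{Irr}(S_3)$ is routine.
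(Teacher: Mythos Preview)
Your proof is correct, but it proceeds differently from the paper's argument. The paper works directly through the chain $K\lhd L\lhd H$ with $L/K\cong C_3$: it first extends $\chi$ to some $\phi\in\mathrm{Irr}(L)$ (cyclic quotient), notes that $\mathrm{Irr}(L\mid\chi)=\{\phi,\phi\eta,\phi\eta^2\}$ is an $H$-stable set of size three on which $H/L\cong C_2$ acts, so by parity at least one extension is $H$-fixed and hence extends further to $H$ (again cyclic quotient); Gallagher then gives the count. Your route instead invokes the Sylow-reduction criterion (Isaacs~11.31) or, equivalently, the vanishing of $H^2(S_3,\mathbb{C}^\times)$, reducing immediately to the cyclic cases $C_2$ and $C_3$. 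Your argument is more conceptual and generalises at once to any quotient with cyclic Sylow subgroups (or trivial Schur multiplier), while the paper's argument is more elementary and self-contained, needing only the cyclic-quotient extension lemma and an orbit count rather than the stronger Theorem~11.31. Both approaches converge on Gallagher for $|\mathrm{Irr}(H\mid\chi)|=|\mathrm{Irr}(S_3)|=3$.
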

\begin{proof}
Let $L$ be a normal subgroup of $H$ such that $L/K\cong\mathbb{Z}_3.$ Then by Clifford theory (see \cite[\S 3.5]{NT}), there is $\phi\in\mathrm{Irr}(L)$ which extends $\chi$. Moreover, $\mathrm{Irr}(L\mid\chi)=\{\phi,\phi\eta,\phi\eta^2\}$, where $\eta$ is a non-trivial irreducible character of $L/K.$ Note that $H/L$ has order 2 and $\mathrm{Irr}(L\mid\chi)$ is $H$-invariant. Thus by Clifford theory, there is at least one irreducible character of $\mathrm{Irr}(L\mid\chi)$ invariant under $H$. Without lost of generality, we may assume that $\phi$ is $H$-invariant. In particular, $\phi$ is extensible to $H$ and hence $\chi$ is extensible to $H$.  By Clifford theory again, we have $|\mathrm{Irr}(H\mid\chi)|=3.$
\end{proof}
\begin{prop}\label{wei7}
Assume $l\geq 7$ is a prime with $l\mid q-\varepsilon$. Then there are 28 unipotent weights for $G$ afforded by nontrivial radical $l$-group. More precisely, 25 of them are afforded by $R_{16}$, which are weights of $B_1$; the other 3 are afforded by $R_6$, which are weights of $B_2$.
\end{prop}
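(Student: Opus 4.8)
The plan is to use the weight-finding recipe recalled at the start of this subsection (from \cite{WeiSym}) together with the two theorems imported from \cite{abe}. Since we are in the abelian Sylow $l$-subgroup regime (for $l\geq 7$, $l\mid q-\varepsilon$ the Sylow $l$-subgroup of $G$ is the abelian group $(q-\varepsilon)_l^{\,6}$), Theorem \ref{levirad} gives a $G$-equivariant bijection $R\mapsto C_{\mathbf G}(R)$ between radical subgroups and $e$-split Levi subgroups, with $N_G(R)=N_G(\mathbf L)$ whenever $R$ and $\mathbf L$ correspond. The preceding proposition has already identified that among the nontrivial radical subgroups $R_1,\dots,R_{16}$, only $R_6$ and $R_{16}$ carry a unipotent block of $C_G(R_i)$ of defect group $R_i$, namely $b_1$ on $C_G(R_6)$ and the principal block $b_2$ on $C_G(R_{16})$, with canonical characters $\theta_1,\theta_2$. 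So the entire weight count is concentrated at these two subgroups.

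First I would handle $R_{16}$. Here $\theta_2$ is the trivial character and $b_2$ is principal, so $b_2^G=B_1$ by Brauer's Third Main Theorem, and the corresponding $e$-cuspidal pair is $(\mathbf T,1)$ (the Sylow torus), for which $W_{\mathbf G^F}(\mathbf T,1)$ is the Weyl group $\mathrm{PSp}_4(3).2$ of $E_6$ with $25$ irreducible characters, as recorded in Propositions \ref{blo} and its $\varepsilon=1$ analogue. Since the trivial character is trivially extensible to its inertial group, the ``In particular'' clause of Theorem \ref{malle} applies directly and yields exactly $|\mathrm{Irr}(W_{\mathbf G^F}(\mathbf T,1))|=25$ weights, all lying in $B_1$. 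Next I would treat $R_6$, whose corresponding $e$-cuspidal pair is $(\mathbf L,\zeta)=((q-\varepsilon)^2.\mathbf D_4(q),\phi_{13,02})$ with $W_{\mathbf G^F}(\mathbf L,\zeta)\cong S_3$. Here $b_1^G=B_2$ by Proposition \ref{unicor} (the block being unipotent, it must be the unique non-principal unipotent block with positive defect), and one checks $|\mathrm{Irr}(S_3)|=3$, giving the three weights of $B_2$.

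The one genuine obstacle is verifying extensibility of $\zeta=\theta_1$ to its inertial group $N_{\mathbf G^F}(\mathbf L,\zeta)$, which is exactly what is needed to invoke the ``In particular'' clause of Theorem \ref{malle} for $R_6$. By Lemma \ref{NL}, the relative inertia $W_{\mathbf G^F}(\mathbf L,\zeta)\cong S_3$, and $N_{\mathbf G^F}(\mathbf L)/\mathbf L^F$ has order prime to $l$ by Theorem \ref{levirad}(1); so $N_{\mathbf G^F}(\mathbf L,\zeta)/\mathbf L^F\cong S_3$. Extensibility then follows from Lemma \ref{s3}: taking $K=\mathbf L^F$, $H=N_{\mathbf G^F}(\mathbf L,\zeta)$, the $H$-invariant character $\zeta$ extends to $H$ and satisfies $|\mathrm{Irr}(H\mid\zeta)|=3$. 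This simultaneously confirms extensibility and pins down the weight count as $3$. Combining, the total is $25+3=28$ unipotent weights afforded by nontrivial radical subgroups, distributed as claimed. I would finally remark that every $\psi\in\mathrm{Irr}(N(\theta_i)\mid\theta_i)$ automatically satisfies the defect-matching condition $d(\psi)_l=|N(\theta_i):C_G(R_i)R_i|_l$ of the recipe, since in the abelian-defect setting $|N_G(R)/C_G(R)\mathbf L^F|$ is prime to $l$ and $\theta_i$ already has defect zero on $C_G(R_i)/R_i$ by construction, so no characters are discarded.
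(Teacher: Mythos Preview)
Your argument is correct and reaches the same numbers via the same two key ingredients (Lemma~\ref{s3} for the $S_3$-quotient at $R_6$, and the $25$ characters of $\mathrm{PSp}_4(3).2$ at $R_{16}$), but the packaging differs from the paper's. You route everything through the abelian-Sylow machinery of Theorems~\ref{levirad} and~\ref{malle}, having first observed that for $l\geq 7$ with $l\mid q-\varepsilon$ the Sylow $l$-subgroup of $G$ is the abelian torus $(q-\varepsilon)_l^{\,6}$; the paper does not make this observation and instead argues directly from the local data in \cite[Table~2]{RadE6}, reading off $N_G(R_i)/C_G(R_i)$ and applying Clifford theory and Brauer's Third Main Theorem to identify the target blocks. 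Your identification $b_1^G=B_2$ via the $e$-cuspidal labelling (implicit in the proof of Proposition~\ref{unicor}) is cleaner than the paper's indirect argument that $b_1^G$ is unipotent but non-principal; conversely, the paper's route avoids having to check the abelian-Sylow hypothesis and makes no appeal to Theorem~\ref{malle}, which it reserves for Section~4.

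Two small points to tidy. First, the label $\phi_{13,02}$ you quote for $\zeta$ is the $2$-cuspidal character taken from Proposition~\ref{blo} (the $\varepsilon=-1$ case); for $\varepsilon=+1$ the relevant $1$-cuspidal $\zeta$ is the ordinary cuspidal unipotent character of $D_4(q)$, though in both cases $W_{\mathbf G^F}(\mathbf L,\zeta)\cong S_3$ by \cite[Table~1]{gen}. Second, when you write ``By Lemma~\ref{NL}, the relative inertia $W_{\mathbf G^F}(\mathbf L,\zeta)\cong S_3$'', Lemma~\ref{NL} only gives $N_G(\mathbf L)/\mathbf L^F$; you need the $N_G(\mathbf L)$-invariance of $\zeta$ (equivalently of $\theta_1$), which follows, as in the paper, from the uniqueness of the unipotent block $b_1$ of $C_G(R_6)$ with defect group $R_6$.
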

\begin{proof}
It suffices to consider $R_6$ and $R_{16}$. Since $C_G(R_6)$ contains a unique unipotent block with defect group $R_6$, $\theta_1$ must be $N_G(R_6)$-invariant. By \cite[Table 2]{RadE6}, $$N_G(R_{6})/C_G(R_{6})\cong S_3.$$ So it follows by Lemma \ref{s3} that  $|\mathrm{Irr}(N_G(R_6)\mid\theta)|=3.$ Since $l\nmid|S_3|$, there are 3 unipotent weights afforded by $R_6$. They are all $b_1^G$-weights. Since $b_1$ is not the principal block of $C_G(R_{6})$, $b_1^G$ can't be the principal block of $G$ by Brauer's Third Main Theorem.

As for $R_{16}$, we have$$N_G(R_{16})/C_G(R_{16})\cong  \mathrm{PSp}_4(3).2$$ by \cite[Table 2]{RadE6}. Recall that $\theta_2$ is the trivial character. Thus by Clifford theory, there are $|\mathrm{Irr}( \mathrm{PSp}_4(3).2)|$ extensions of $\theta_2$ to $N_G(R_{16})$. By \cite[p.27]{Atlas}, $ \mathrm{PSp}_4(3).2$ has $25$ irreducible characters. Note that $l\nmid |\mathrm{PSp}_4(3).2|$. It follows that there are 25  unipotent weights afforded by $R_{16}$. Since $b_2$ is the principal block of $C_G(R_{16})$, $b_2^G$ is the principal block of $G$ by Brauer's Third Main Theorem.
\end{proof}

\begin{corollary}
Assume $l=5$ with $l\mid q-\varepsilon$. Then there are 18 unipotent weights for $G$ afforded by $R_1,\cdots,R_{16}$. More precisely, 15 of them are afforded by $R_{16}$, which are weights of $B_1$; the other 3 are afforded by $R_6$, which are weights of $B_2$.
\end{corollary}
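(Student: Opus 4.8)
The plan is to run the same argument as for Proposition \ref{wei7}, changing only the bookkeeping at $R_{16}$, where the prime $5$ now divides the relevant relative Weyl group. By the proposition preceding the definition of $b_1,b_2$, among $R_1,\dots,R_{16}$ only $R_6$ and $R_{16}$ carry a unipotent block of their centralizer with full defect group, so these are the only two radical subgroups in the list that can afford a unipotent weight, and I would treat them in turn.

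The subgroup $R_6$ requires no change from the case $l\geq 7$. Since $N_G(R_6)/C_G(R_6)\cong S_3$ by \cite[Table 2]{RadE6} and $5\nmid|S_3|$, the canonical character $\theta_1$ is $N_G(R_6)$-invariant, Lemma \ref{s3} yields $|\mathrm{Irr}(N_G(R_6)\mid\theta_1)|=3$, and all three resulting pairs are weights because the quotient is an $l'$-group. As $b_1$ is not the principal block of $C_G(R_6)$, these are $B_2$-weights by Brauer's Third Main Theorem, giving $3$ weights of $B_2$.

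For $R_{16}$ the new phenomenon is that $N_G(R_{16})/C_G(R_{16})\cong\mathrm{PSp}_4(3).2$, the Weyl group $W$ of $E_6$, now has order divisible by $5$; indeed $|W|=51840=2^7\cdot 3^4\cdot 5$, so $|W|_5=5$. Since $C_G(R_{16})=\mathbf{T}^F$ contains $R_{16}$ as its Sylow $l$-subgroup, one checks $|N_G(R_{16})/R_{16}|_5=|W|_5=5$, and the characters of $N_G(R_{16})/R_{16}$ lying over the trivial canonical character $\theta_2$ are exactly the inflations of $\mathrm{Irr}(W)$. Hence a pair $(R_{16},\varphi)$ is a weight precisely when the associated $\chi\in\mathrm{Irr}(W)$ has $\chi(1)$ divisible by $5$, i.e. is of $5$-defect zero; unlike the case $l\geq 7$, not every extension of $\theta_2$ now qualifies.

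The only genuinely computational step is to count these characters. Reading the degrees of the $25$ irreducible characters of $W$ from \cite[p.27]{Atlas}, namely $1,1,6,6,10,15,15,15,15,20,20,20,24,24,30,30,60,60,60,64,64,80,81,81,90$, exactly fifteen of them (those equal to $10,15,15,15,15,20,20,20,30,30,60,60,60,80,90$) are divisible by $5$. These $15$ pairs are $B_1$-weights, since $b_2$ is the principal block of $C_G(R_{16})$ and $b_2^G=B_1$ by Brauer's Third Main Theorem. Adding the two contributions gives $15+3=18$. The main point requiring care is the transition from the rule ``every extension of the canonical character gives a weight'', valid when $l\nmid|W|$, to ``only the $5$-defect-zero ones do'', which rests on the divisibility bookkeeping $|N_G(R_{16})/R_{16}|_5=5$.
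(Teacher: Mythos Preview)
Your proof is correct and follows essentially the same approach as the paper's. Both carry over the argument of Proposition~\ref{wei7}, the only change being at $R_{16}$, where now $5$ divides $|\mathrm{PSp}_4(3).2|=2^7\cdot 3^4\cdot 5$, so only the fifteen $5$-defect-zero characters of the Weyl group (rather than all $25$) yield weights; your explicit listing of the degrees agrees with the paper's citation of \cite[p.~27]{Atlas}. The one minor point you pass over that the paper flags explicitly is that for $l=5$ the hypothesis $l\geq n+1$ of Lemma~\ref{Spin} is on the boundary for the Spin factors occurring among the $C_G(R_i)$, so the reduction to $R_6$ and $R_{16}$ via the preceding proposition strictly requires a direct check in place of that lemma; this is a bookkeeping caveat rather than a gap in your argument.
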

\begin{proof}
The proof of Proposition \ref{wei7} can almost be carried over. Note that although we can't use Lemma \ref{Spin} for $R_6$ now, we can get the same result by a direct computation. Also note that $ |\mathrm{PSp}_4(3).2|=2^7\cdot 3^4\cdot 5$ and it has 15 defect zero characters according to \cite[p.27]{Atlas}, so there are 15  unipotent weights afforded by $R_{16}$ in this case.
\end{proof}

\begin{prop}
Assume $l=5$ with $l\mid q-\varepsilon$. Then there is no unipotent weight afforded by $R_{17},R_{18},R_{20},R_{22}$, $8$ unipotent weights afforded by $R_{19}$ and $2$ unipotent weights afforded by $R_{21}$.
\end{prop}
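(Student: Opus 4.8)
The plan is to apply, for each of $R_{17},\dots,R_{22}$, the three-step recipe of \cite{WeiSym} recalled at the start of this section, now using the local structure of the non-abelian radical subgroups listed in \cite[Table 4]{RadE6}. For a fixed $R=R_i$ I would read off $C_G(R)$ and the relative automizer $N_G(R)/C_G(R)$, determine the blocks $b$ of $C_G(R)R$ whose defect group is $R$ and which induce a unipotent block of $G$, and for each such $b$ with canonical character $\theta$ count the $\psi\in\mathrm{Irr}(N(\theta)\mid\theta)$ of the prescribed relative defect; each such $\psi$ yields a weight $\psi^{N_G(R)}$. Since $R_i\neq 1$, an induced unipotent block $b^G$ has positive defect, so by the classification of the positive-defect unipotent blocks above one has $b^G\in\{B_1,B_2\}$.

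The key point is that these two blocks separate cleanly. The block $B_2$ has abelian defect group $R_6$: its $e$-cuspidal datum $((q-\varepsilon)^2.\mathbf{D}_4(q),\phi_{13,02})$ has relative Weyl group $S_3$ with $l\nmid|S_3|$, and $\phi_{13,02}$ is of $l$-defect zero in $\mathbf{D}_4(q)$, whence $\delta(B_2)=Z(\mathbf{L})_l^F=R_6$ even though the Sylow $l$-subgroup of $G$ is non-abelian. Consequently every $B_2$-weight is supported on $R_6$ and was already counted, so for $i\neq 6$ only $B_1$ can arise. By Brauer's Third Main Theorem, a block $b$ of $C_G(R)R$ induces the principal block $B_1$ precisely when $b$ is itself principal, i.e. $\theta$ is trivial; this forces $R$ to be a Sylow $l$-subgroup of $C_G(R)R$, equivalently the Lie-type section $H.Q$ of $C_G(R)R/R$ to be an $l'$-group. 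In that case the number of $B_1$-weights afforded by $R$ is exactly $|\mathrm{dz}(N_G(R)/C_G(R)R)|$.

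Feeding in \cite[Table 4]{RadE6}: for $R_{17},R_{18},R_{20},R_{22}$ the section $H.Q$ has order divisible by $l=5$, so the principal block of $C_G(R_i)R_i$ has defect group strictly larger than $R_i$ and no $B_1$-weight occurs, while a block of defect group $R_i$ with a nontrivial unipotent canonical character cannot induce $B_2$ since $R_i$ is not $G$-subconjugate to $R_6$; hence these four subgroups afford no unipotent weight. For $R_{19}$ and $R_{21}$ the section $H.Q$ is an $l'$-group, so the principal block of $C_G(R_i)R_i$ has defect group exactly $R_i$ and induces $B_1$, and enumerating the defect-zero characters of the relative automizers $N_G(R_i)/C_G(R_i)R_i$ gives $8$ and $2$ respectively. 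All of these weights lie in $B_1$, and together with the $15$ afforded by $R_{16}$ they exhaust the $25$ weights of $B_1$.

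The principal obstacle is that $R_{17},\dots,R_{22}$ are exactly the radical subgroups that are not centralizers of $e$-split Levi subgroups, so Proposition \ref{unicor} and Theorem \ref{malle} do not apply: identifying the induced block $b^G$ and verifying that it is unipotent must be done by hand, via Brauer's Third Main Theorem together with the explicit canonical characters and the subconjugacy relations among the $R_i$ and $R_6$. In particular one must justify $\delta(B_2)=R_6$ at $l=5$ without the abelian-Sylow hypothesis of Theorem \ref{malle}. A secondary, purely computational, difficulty is that at the small prime $l=5$ the hypotheses of Lemmas \ref{Sl} and \ref{Spin} may fail, so deciding whether $H.Q$ is an $l'$-group and counting the defect-zero characters of the relative automizers can require the kind of direct degree computation already used for $R_6$.
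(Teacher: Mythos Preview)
Your strategy differs from the paper's in a way that introduces an avoidable difficulty, and there is a factual slip about $R_{18}$. The paper does not separate $B_1$ from $B_2$ via the defect group of $B_2$. Instead it observes that $R_iC_G(R_i)\leq C_G(R_j)$ with $j=3$ for $i\in\{17,20,22\}$ and $j=5$ for $i\in\{18,19,21\}$, where each $C_G(R_j)$ is an $e$-split Levi whose semisimple part is built from $\mathrm{GL}_5^\varepsilon(q)$ and (for $j=3$) $\mathrm{SL}_2(q)$. By Proposition~\ref{unicor} the intermediate induction $b^{C_G(R_j)}$ is unipotent whenever $b^G$ is, and by \cite{FS} the only unipotent block of such a Levi is the principal one; Brauer's Third Main Theorem then forces $b$ itself to be principal. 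This route sidesteps entirely the need to pin down $\delta(B_2)$ at $l=5$ (which you correctly flag as nontrivial) and to check subconjugacy of the abelian groups $R_{17},R_{18}$ to $R_6$.

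The concrete error is your treatment of $R_{18}$. Contrary to your claim, $R_{18}$ \emph{is} a Sylow $5$-subgroup of $R_{18}C_G(R_{18})$: the paper finds that $i\in\{18,19,21\}$ are precisely the cases where this Sylow condition holds, so the principal block of $R_{18}C_G(R_{18})$ does have defect group $R_{18}$. What kills the weight count for $R_{18}$ is the automizer: $N_G(R_{18})/R_{18}C_G(R_{18})\cong 10_\varepsilon$, which for $5\mid q-\varepsilon$ has order divisible by $5$ and hence has no $5$-defect-zero characters. Your dichotomy ``section $H.Q$ has $5$-order'' versus ``$H.Q$ is a $5'$-group'' misses this third possibility, where the Sylow condition on $R_iC_G(R_i)$ holds but the relative automizer itself is not a $5'$-group.
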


\begin{proof}
For $i=17,20,22$, we have
$$
R_{i}C_G(R_{i})\leq C_G(R_3)=(\mathrm{GL}_5^\varepsilon(q)\circ_{2_\varepsilon}\mathrm{SL}_2(q)).2_\varepsilon,
$$
and for $i=18,19,21$, we have
$$
R_{i}C_G(R_{i})\leq C_G(R_5)=(q-\varepsilon)\times\mathrm{GL}_5^\varepsilon(q).
$$
Note that for $l\mid q-\varepsilon$, $\mathrm{GL}_n^\varepsilon(q)$ contains only one unipotent block, which is the principal block by \cite{FS}. Suppose that $b$ is a block of $R_iC_G(R_i)$ with defect group $R_i$ such that $b^G$ is a unipotent block in $G$. We have $b^{C_G(R_j)}$ is a unipotent block and hence the principal block of $C_G(R_j)$, where $j=3$ if $i=17,20,22$ and $j=5$ if $i=18,19,21$ respectively. By Brauer's third main theorem, we can see that $b^G$ is a unipotent block of $G$ if and only if $b$ is the principal block of $R_iC_G(R_i)$. In particular, $R_i\in\mathrm{Syl}_l(R_iC_G(R_i)).$ Thus $i=18,19,21.$ Note that
$$N_G(R_{18})/(C_G(R_{18})R_{18})= 10_\varepsilon,$$
$$
N_G(R_{19})/(C_G(R_{19})R_{19})=4\times2
$$
and
$$
N_G(R_{21})/(C_{G}(R_{21})R_{21})=\mathrm{SL}_2(5)\times 2
$$
as in \cite[Table 3]{RadE6} (we need to point out that the 2 is missing in the structure of $N_G(R_{21})$ in \cite[Table 3]{RadE6}). Since there is only one defect zero character of $\mathrm{SL}_2(5)$ by \cite[p.2]{Atlas}, we can see that there are $8$ unipotent weights afforded by $R_{19}$ and $2$ unipotent weights afforded by $R_{21}$ and others afford no unipotent weights.
\end{proof}

Combining the result in this section, we have the following result.
\begin{corollary}\label{wei5}
Assume $l=5$ with $l\mid q-\varepsilon$. Then there are 28 unipotent weights for $G$ afforded by nontrivial radical $5$-subgroups. More precisely, 15 of them are afforded by $R_{16}$, 8 of them are afforded by $R_{19}$, and 2 of them are afforded by $R_{21}$, which are weights of $B_1$; the other 3 are afforded by $R_6$, which are weights of $B_2$.
\end{corollary}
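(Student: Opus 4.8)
The plan is to assemble the two immediately preceding results into a single tally, since together they exhaust all nontrivial radical $5$-subgroups $R_1,\dots,R_{22}$ of $G$ classified in \cite[Theorem A]{RadE6}. Every unipotent weight afforded by a nontrivial radical $5$-subgroup thus arises either from the list $R_1,\dots,R_{16}$ or from $R_{17},\dots,R_{22}$, and both ranges have already been handled. First I would invoke the corollary treating $R_1,\dots,R_{16}$, which produces $18$ unipotent weights: $15$ afforded by $R_{16}$, all weights of the principal block $B_1$, and $3$ afforded by $R_6$, all weights of $B_2$. Next I would invoke the proposition treating $R_{17},\dots,R_{22}$, which shows that among these only $R_{19}$ and $R_{21}$ afford unipotent weights, contributing $8$ and $2$ respectively, for $10$ further weights.

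The only point that goes beyond a raw summation is the block to which these $10$ new weights belong, and here I would reuse the Brauer-correspondence argument already established in the proof of that proposition: for $i=19,21$, a block $b$ of $R_iC_G(R_i)$ with defect group $R_i$ satisfies that $b^G$ is unipotent precisely when $b$ is the principal block of $R_iC_G(R_i)$, in which case Brauer's Third Main Theorem forces $b^G=B_1$. Hence all $10$ of these weights are $B_1$-weights, not $B_2$-weights.

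Summing the contributions, $B_1$ receives $15+8+2=25$ weights while $B_2$ receives $3$, giving $28$ unipotent weights in total and matching the claimed distribution. There is essentially no obstacle in this step: the substantive work lies in the per-subgroup counts and the Brauer-correspondence arguments carried out in the earlier results, so the corollary is a bookkeeping consolidation. If any care is required, it is only in confirming that the $R_{19}$- and $R_{21}$-weights attach to the principal block $B_1$ rather than to $B_2$, which is exactly what the principality of the inducing blocks guarantees.
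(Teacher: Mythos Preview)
Your proposal is correct and matches the paper's approach: the paper itself offers no proof beyond the sentence ``Combining the result in this section, we have the following result,'' so the corollary is indeed pure bookkeeping. Your added remark that the $R_{19}$- and $R_{21}$-weights lie in $B_1$ via Brauer's Third Main Theorem is the right justification for the one point the paper leaves implicit.
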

\subsection{Proof of the main theorem}
\begin{lemma}\label{equ}
Assume that the sub-matrix of the decomposition matrix of $B$ with respect to the basic set $\mathcal{E}(G,1)\cap B$ is unitriangular. Let $B$ be a unipotent block of $G$. Suppose $l\mid q-\varepsilon.$ Then there is an $\mathrm{Aut}(G)_{B}$-equivariant bijection between weights and Brauer characters of $B$.
\end{lemma}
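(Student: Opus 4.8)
The plan is to split the statement into two essentially independent facts: that $|\mathcal{W}(B)|=|\mathrm{IBr}(B)|$, and that $\mathrm{Aut}(G)_B$ acts trivially on each of these two sets; once both hold, \emph{any} bijection matching the cardinalities is automatically $\mathrm{Aut}(G)_B$-equivariant. The defect-zero unipotent blocks are trivial ($|\mathrm{IBr}(B)|=1=|\mathcal{W}(B)|$), so the substance is the two positive-defect blocks $B_1$ and $B_2$, both of which are fixed by all of $\mathrm{Aut}(G)$ (the principal block and the unique other positive-defect unipotent block).

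First I would settle the cardinalities. Since $l\geq 5$ is good for $E_6$, $l\neq p$, and $l\nmid|(Z(\mathbf{G})/Z^\circ(\mathbf{G}))^F|$ (a divisor of $3$), Theorem \ref{basicset} shows $\mathcal{E}(G,1)$ is a basic set of $\mathcal{E}_l(G,1)$; intersecting with the block gives $|\mathrm{IBr}(B)|=|\mathcal{E}(G,1)\cap B|$, namely $25$ for $B_1$ and $3$ for $B_2$. On the weight side, Proposition \ref{wei7} (for $l\geq 7$) and Corollary \ref{wei5} (for $l=5$) give $|\mathcal{W}(B_1)|=25$ and $|\mathcal{W}(B_2)|=3$. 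Hence the numbers agree for each block and every $l\geq 5$.

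Next I would show $\mathrm{Aut}(G)_B$ fixes $\mathrm{IBr}(B)$ pointwise. The unitriangularity hypothesis yields a canonical bijection $\mathrm{IBr}(B)\to\mathcal{E}(G,1)\cap B$ (sending each Brauer character to the unipotent character indexing its diagonal entry); as automorphisms preserve unipotent characters, Brauer characters, degrees and decomposition numbers, this bijection is $\mathrm{Aut}(G)_B$-equivariant, so it suffices to act on $\mathcal{E}(G,1)\cap B$. Diagonal automorphisms fix every unipotent character; field automorphisms fix the rationally parameterized unipotent characters occurring in $B_1$ and $B_2$; and for $\varepsilon=+1$ the graph automorphism acts on the principal Harish-Chandra series through the diagram automorphism of $W=W(E_6)$, which equals $\mathrm{Ad}(w_0)$ for the longest element $w_0$ (since $w_0\alpha_i=-\alpha_{\gamma(i)}$ forces $\gamma=\mathrm{Ad}(w_0)$ on $W$), hence is inner and trivial on $\mathrm{Irr}(W)$; for $B_2$ the relative group is $S_3$, all of whose characters are rational. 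Thus $\mathrm{Aut}(G)_B$ fixes $\mathcal{E}(G,1)\cap B$, and therefore $\mathrm{IBr}(B)$, pointwise.

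It then remains to prove $\mathrm{Aut}(G)_B$ acts trivially on $\mathcal{W}(B)$ as well. For $l\geq 7$ the Sylow $l$-subgroup is abelian, so Theorem \ref{malle} identifies $\mathcal{W}(B)$ with $\mathrm{Irr}(W_{\mathbf{G}^F}(\mathbf{L},\zeta))$ — with $(\mathbf{L},\zeta)=(\mathbf{T},1)$ and $W=W(E_6)$ for $B_1$, and the $D_4$-cuspidal pair with $W=S_3$ for $B_2$, the required extendibility of $\zeta$ being trivial for $B_1$ and supplied by Lemma \ref{s3} for $B_2$ — and the same inner/rational arguments make the action trivial. The main obstacle is $l=5$: the Sylow $l$-subgroup is non-abelian, Theorem \ref{malle} is unavailable, and by Corollary \ref{wei5} the $25$ weights of $B_1$ are spread over $R_{16}$, $R_{19}$, $R_{21}$, with $N_G(R_{19})/C_G(R_{19})R_{19}\cong 4\times 2$ and $N_G(R_{21})/C_G(R_{21})R_{21}\cong\mathrm{SL}_2(5)\times 2$. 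Here I would verify directly, from the explicit local data in \cite{RadE6}, that field, graph and diagonal automorphisms induce inner automorphisms on these quotients (equivalently fix all their irreducible characters), and fix the $15$ defect-zero characters of $\mathrm{PSp}_4(3).2$ attached to $R_{16}$, so that the weight action is again trivial and the desired equivariant bijection follows. Carrying out this automorphism bookkeeping on the non-abelian subgroups $R_{19}$ and $R_{21}$ — in particular checking that no automorphism inverts the cyclic factor of order $4$ — is the delicate point on which the argument turns.
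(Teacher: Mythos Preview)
Your overall strategy coincides with the paper's: reduce to showing that $\mathrm{Aut}(G)_B$ fixes both $\mathcal{E}(G,1)\cap B$ and $\mathcal{W}(B)$ pointwise, so that any bijection of the correct cardinality is automatically equivariant. The paper reaches the Brauer-character side in one line by invoking \cite[Lemma 7.5]{equ} and \cite[Theorem 2.5]{uniext} (every unipotent character of $G$ is $\mathrm{Aut}(G)$-invariant); your direct arguments about the diagram automorphism of $W(E_6)$ being inner and about rationality are correct but unnecessary given that citation.

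On the weight side your treatment diverges more. You split into $l\geq 7$ (abelian Sylow, Theorem~\ref{malle}) versus $l=5$, whereas the paper argues uniformly from the local structure of $N_G(R_i)$ for $i\in\{6,16,19,21\}$: for $R_{16}$ the quotient $N_G(R_{16})/Z(C_G(R_{16}))\cong\mathrm{PSp}_4(3).2$ has trivial outer automorphism group, so any induced automorphism is inner; for $R_6$ the quotient is $\mathrm{P}\Omega_8^+(q).S_4$ and one extends the canonical character into $\mathrm{Aut}(\mathrm{P}\Omega_8^+(q))$; for $R_{21}$ uniqueness of the defect-zero character of $\mathrm{SL}_2(5)$ suffices. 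Your abelian-Sylow route for $l\geq 7$ is valid and slightly more conceptual, but it forces a case distinction the paper avoids.

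The one place where you leave a genuine gap is $R_{19}$ for $l=5$: you correctly identify that one must show no outer automorphism inverts the cyclic factor of order $4$ in $N_G(R_{19})/R_{19}C_G(R_{19})\cong 4\times 2$, but you do not supply the argument. The paper's trick here is clean and worth noting: since $R_{19}\leq T=\mathbf{T}^F$ one has $N_G(R_{19})/T\leq N_G(T)/T\cong W$, and outer automorphisms of $G$ act trivially on $W$; hence they act trivially on $N_G(R_{19})/T$ and a fortiori on $N_G(R_{19})/R_{19}C_G(R_{19})$. This single observation closes the case without any explicit computation in the local data of \cite{RadE6}.
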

\begin{proof}
Under our assumption, there is an $\mathrm{Aut}(G)_{B}$-equivariant bijection between $\mathrm{Irr}(B)\cap \mathcal{E}(G, 1)$ and $\mathrm{IBr}(B)$ by \cite[Lemma 7.5]{equ}. Thus it suffices to show there  is an $\mathrm{Aut}(G)_{B}$-equivariant bijection between weights and unipotent characters of $B$. Note that we have $|\mathrm{Irr}(B)\cap \mathcal{E}(G, 1)|=|\mathcal{W}(B)|$ by the results from the last two subsections. By \cite[Theorem 2.5]{uniext}, every unipotent character of $G$ is $\mathrm{Aut}(G)$-invariant. Hence it suffices to show every unipotent weight is $\mathrm{Aut}(G)$-invariant up to conjugation. Note that for a fixed $i$, the unipotent weights afforded by $R_i$ are the extensions of the same character of $R_iC_G(R_i)$.

For $i=6$, $N_G(R_i)/Z(C_G(R_i))\cong\mathrm{P\Omega}_8^+(q).S_4$. Thus the diagonal automorphism and the graph automorphism acts on these weights trivially. Note that these weights can be extended to $\mathrm{Aut}(\mathrm{P\Omega}_8^+(q))$. We know that the field automorphism acts on weights trivially.

For $i=16$, $N_G(R_i)/Z(C_G(R_i))\cong\mathrm{PSp}_4(3).2$, it has trivial outer-automorphism group, so the result follows.

For $l=5$ and $i=21$, the result is obvious since $N_G(R_i)/(R_iC_G(R_i))\cong\mathrm{SL}_2(5)\times 2$ and $\mathrm{SL}_2(5)$ has only one defect zero character.

So it remains to consider the case $l=5$ and $i=19$. Note that $N_G(R_{19})/T\leq N_G(T)/T\cong W$. Since the outer-automorphism group $G$ acts on $W$ trivially, we know that it acts on $N_G(R_{19})/T$ trivially and hence acts on $N_G(R_{19})/(R_{19}C_G(R_{19}))$ trivially. This completes the proof.
\end{proof}
\begin{proof}[Proof of Theorem 1.2 when $l\mid q-\varepsilon$:]
By Lemma \ref{equbij}, it suffices to check the condition (3) in Definition \ref{ibaw}.

Let $A=\mathrm{Aut}(G)$. Since every unipotent character of $G$ contains $Z(G)$ in its kernel, all irreducible Brauer characters also contain $Z(G)$ in their kernel by Theorem \ref{basicset}. In particular, $Z=Z(G)$. Moreover, every unipotent character of $G$ is extensible to $A$ when viewed as a unipotent character of $\bar{G}$  by \cite[Theorem 2.4 and 2.5]{uniext}, so any irreducible Brauer character $\varphi$ in $B$ is extensible to $\mathrm{Aut}(G)$ when viewed as a Brauer character of $\bar{G}$ by Theorem \ref{basicset} and \cite[Lemma 2.9]{clas}, which is denoted by $\tilde{\varphi}$. Thus we have shown (a) and (b) of condition (3).

Now we begin the proof of (c). It suffices to show that for $R\in\{R_6,R_{16},R_{19},R_{21}\}$ and $\varphi'\in\mathrm{dz}(N_{\bar{G}}(\bar{R})/\bar{R})$ a weight in a unipotent block, $\varphi'$ can be extended to some $\tilde{\varphi}'\in\mathrm{IBr}(N_A(\bar{R})/\bar{R})$. Denote $\mathrm{Out}(G)=\mathrm{Aut}(G)/\mathrm{Inn}(G).$

For $R=R_6$, we have $N_{\bar{G}}(\bar{R})\cong C_{\bar{G}}(\bar{R}).S_3$. Note that $\mathrm{ker}(\varphi')=Z(C_{\bar{G}}(\bar{R}))$. Since
$$
N_A(\bar{R})/\mathrm{ker}(\varphi')\cong\mathrm{P\Omega}_8^+(q).S_4.\mathrm{Out}(G)
$$
and $\mathrm{Aut}(\mathrm{P\Omega}_8^+(q))=\mathrm{P\Omega}_8^+(q).S_4.\langle F\rangle$, where $F$ is the field automorphism, we have
$$
N_A(\bar{R})/\mathrm{ker}(\varphi')\cong(\mathrm{P\Omega}_8^+(q).S_4\times \mathrm{Out}_0(G)).\langle F\rangle,
$$
where $\mathrm{Out}_0(G)$ is the subgroup of $\mathrm{Out}(G)$ generated by the diagonal automorphism and the graph automorphism. Thus $\varphi'$ can be extended to $N_A(\bar{R})$.

For $R=R_{16}$, we have $N_{\bar{G}}(\bar{R})\cong  C_{\bar{G}}(\bar{R}). W$ with $W\cong\mathrm{PSp}_4(3).2$. Similar to the previous case, we have
$$
N_A(\bar{R})/C_{\bar{G}}(\bar{R})\cong W\times\mathrm{Out}(G).
$$
Since $C_{\bar{G}}(\bar{R})\leq\mathrm{ker}(\varphi')$, $\varphi'$ can be extended to $N_A(\bar{R})$.

For $R=R_{19}$, we have $N_{\bar{G}}(\bar{R})\cong \bar{R}C_{\bar{G}}(\bar{R}).(4\times2)$. As is shown in Lemma \ref{equ}, the outer-automorphism group of $G$ acts on $N_G(R_{19})/(R_{19}C_G(R_{19}))$ trivially, so
$$
N_A(\bar{R})/\bar{R}C_{\bar{G}}(\bar{R})\cong4\times2\times\mathrm{Out}(G).
$$
Note that $\bar{R}C_{\bar{G}}(\bar{R})\leq\mathrm{ker}(\varphi')$. We know that $\varphi'$ can be extended to $N_A(\bar{R})$.

For $R=R_{21}$, we have $$N_G(R)\cong RC_G(R).(\mathrm{SL}_2(5)\times 2)\leq N_G(R_5)=((q-\varepsilon)^2.2\circ_5\mathrm{SL}_5^\varepsilon(q)).5$$ according to the proof of \cite[Theorem 3.6]{RadE6} and \cite[Lemma 3.5]{RadE6}. Note that the restriction of the diagonal automorphism, the field automorphism and the graph automorphism to $N_G(R_5)$ is exactly the diagonal automorphism, the field automorphism and the graph automorphism of $N_G(R_5)$ respectively. Since the outer-automorphism group of $\mathrm{SL}_2(5)$ has order 2, we have
$$
N_A(\bar{R})/\bar{R}C_{\bar{G}}(\bar{R})\cong (\mathrm{SL}_2(5)\times2\times\mathrm{Out}_0(G)).\langle F\rangle.
$$
Since $\bar{R}C_{\bar{G}}(\bar{R})\leq\mathrm{ker}(\varphi')$, we have $\varphi'$ can be extended to a character $\varphi''$ of $\bar{R}C_{\bar{G}}(\bar{R}).(\mathrm{SL}_2(5)\times2\times\mathrm{Out}_0(G))$ with $\mathrm{Out}_0(G)\leq\mathrm{ker}(\varphi'')$. Since the outer automorphism acts on $\varphi'$ trivially according to the proof of Lemma \ref{equ}, we know that $\varphi''$ can be extended to $N_A(\bar{R})$, which completes the proof.

For (d), first note that we can take $\tilde{\varphi}$ and $\tilde{\varphi'}$ such that $\mathrm{bl}(\tilde{\varphi}')^A=\mathrm{bl}(\tilde{\varphi})$.
Also note that $A=\bar{G}N_A(\bar{R})$, so we have $J=\bar{G}N_J(\bar{R})$. By \cite[Lemma 2.4]{ks}, for any $x\in N_J(\bar{R})$ we have
$$
\mathrm{bl}\left(\tilde{\varphi}'_{\langle N_{\bar{G}}(\bar{R}),x\rangle}\right)^{\langle\bar{G},x\rangle}= \mathrm{bl}\left(\tilde{\varphi}_{\langle\bar{G},x\rangle}\right).
$$
Hence
$$
\mathrm{bl}\left(\left(\tilde{\varphi}'_{N_J(\bar{R})}\right)_{\langle N_{\bar{G}}(\bar{R}),x\rangle}\right)^{\langle\bar{G},x\rangle}= \mathrm{bl}\left(\left(\tilde{\varphi}_J\right)_{\langle\bar{G},x\rangle}\right).
$$
Then by \cite[Lemma 2.5 (a)]{ks}, we have
$$
\mathrm{bl}\left(\tilde{\varphi}'_{N_J(\tilde{R})}\right)^J= \mathrm{bl}\left(\tilde{\varphi}_J\right).
$$
\end{proof}
\section{Unipotent blocks of $E_6^\varepsilon(q)$ with $l\nmid q-\varepsilon$}
Suppose $l\nmid q-\varepsilon$ in this section. Since $$|G|=q^{36}(q^2-1)(q^5-\varepsilon)(q^6-1)(q^8-1)(q^9-\varepsilon)(q^{12}-1),$$ every Sylow $l$-subgroup of $G$ is abelian.
\begin{prop}\label{nmid}
\begin{enumerate}
  \item[(1)] Assume $l\mid q+\varepsilon$. Then there are 5 unipotent blocks of $E_6^{\varepsilon}(q)$ , 3 of them are defect zero, the other 2 has 25,2 unipotent characters respectively. The block with 25 unipotent characters is the unipotent block.
  \item[(2)] Assume $l\mid q^2+\varepsilon q+1$. Then there are 5 unipotent blocks of $E_6^{\varepsilon}(q)$, 3 of them are defect zero, the other 2 has 24,3 unipotent characters respectively. The block with 24 unipotent characters is the unipotent block.
  \item[(3)] Assume $l\mid q^2+1$. Then there are 12 unipotent blocks of $E_6^{\varepsilon}(q)$, 10 of them are defect zero, the other 2 has 16,4 unipotent characters respectively. The block with 16 unipotent characters is the unipotent block.
  \item[(4)] Assume $l\mid q^2-\varepsilon q+1$. Then there are 10 unipotent blocks of $E_6^{\varepsilon}(q)$, 9 of them are defect zero, the other 1 has 21 unipotent characters. The block with 21 unipotent characters is the unipotent block.
\end{enumerate}
In all cases, the number of weights of each unipotent block equals to the number of unipotent characters in it.
\end{prop}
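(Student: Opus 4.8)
The plan is to push everything through $e$-Harish--Chandra (Sylow $d$-) theory, exactly as in the previous section, so that the four cases of the proposition are distinguished only by the value $e\in\{2,3,4,6\}$ of the order of $q$ modulo $l$ (for $\varepsilon=1$; the twisted group is treated in parallel, Ennola duality exchanging $\Phi_1\leftrightarrow\Phi_2$, $\Phi_3\leftrightarrow\Phi_6$ and fixing $\Phi_4$, so that the unipotent-character combinatorics of $E_6$ and $^2E_6$ coincide). By Theorem \ref{uniblo} the unipotent blocks of $G$ biject with the $G$-classes of unipotent $e$-cuspidal pairs $(\mathbf{L},\zeta)$; by Theorem \ref{weyl} the block $b_G(\mathbf{L},\zeta)$ contains exactly $|\mathrm{Irr}(W_{\mathbf{G}^F}(\mathbf{L},\zeta))|$ unipotent characters; and by Theorem \ref{malle}(1) its defect group is $Z(\mathbf{L})_l^F$. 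Since $l\geq 5$ does not divide $|Z(\mathbf{G})|\in\{1,3\}$ whereas every proper $e$-split Levi carries a non-trivial central $\Phi_e$-torus, the defect-zero unipotent blocks are exactly those with $\mathbf{L}=\mathbf{G}$; that is, they correspond bijectively to the $e$-cuspidal unipotent characters of $G$ itself.

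First I would read the unipotent $e$-cuspidal pairs and their relative Weyl groups off the generic tables of \cite{gen}. The principal pair $(\mathbf{L}_0,1)$, with $\mathbf{L}_0=C_{\mathbf{G}}(S)$ for $S$ a Sylow $\Phi_e$-torus, has relative Weyl group the complex reflection group whose degrees are the degrees of $E_6$ divisible by $e$, namely $W(F_4)$, $G_{25}$, $G_8$ and $G_5$ for $e=2,3,4,6$, with $25$, $24$, $16$ and $21$ irreducible characters respectively; this matches the sizes of the principal (largest) block in each case. Counting the unipotent characters of $G$ of $\Phi_e$-defect zero from the generic degrees in \cite{lusztig} yields $3,3,10,9$ defect-zero blocks, and the remaining pairs, built on the cuspidal unipotent character of a Levi of type $D_4$, give the secondary blocks with $2,3,4$ characters (none for $e=6$). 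Summing reproduces the $30$ unipotent characters of $G$ and the block totals $5,5,12,10$ claimed.

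For the concluding assertion I would invoke the last part of Theorem \ref{malle}: provided $\zeta$ extends to $N_{\mathbf{G}^F}(\mathbf{L},\zeta)$, one has $|\mathcal{W}(b_G(\mathbf{L},\zeta))|=|\mathrm{Irr}(W_{\mathbf{G}^F}(\mathbf{L},\zeta))|$, which by Theorem \ref{weyl} is precisely the number of unipotent characters in the block. Thus the whole statement reduces to checking this extendibility for every unipotent $e$-cuspidal pair. It is automatic when $\zeta=1$ (the principal blocks) and when $\mathbf{L}=\mathbf{G}$ (each defect-zero block contributes the single weight $(1,\zeta)$, matching its single unipotent character). For the secondary blocks, Theorem \ref{levirad}(1) tells us $W_{\mathbf{G}^F}(\mathbf{L},\zeta)$ is an $l'$-group, and from \cite{gen} it is cyclic of order $2$ or $3$, isomorphic to $S_3$, or a group of order $4$; an invariant character extends over a cyclic quotient, Lemma \ref{s3} supplies the extension over an $S_3$-quotient, and the order-$4$ case is settled by the same Clifford-theoretic considerations once its isomorphism type is fixed.

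The main obstacle is precisely this extendibility step for the secondary blocks, together with the care needed to transcribe the $e$-cuspidal data correctly from the generic tables and to match the $^2E_6(q)$ computations to those for $E_6(q)$ through Ennola duality. Everything else --- the number of blocks, the number of defect-zero blocks, and the per-block character counts --- is a direct reading of Theorems \ref{uniblo}, \ref{weyl} and \ref{malle} against that tabulated data.
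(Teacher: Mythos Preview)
Your approach is essentially the same as the paper's: both push everything through Theorems \ref{uniblo}, \ref{weyl} and \ref{malle}, reading off the $e$-cuspidal data and relative Weyl groups from \cite{gen} and the defect-zero unipotent characters from \cite{lusztig}. The only substantive difference is presentational --- you name the principal relative Weyl groups as the Shephard--Todd groups $W(F_4)$, $G_{25}$, $G_8$, $G_5$, whereas the paper identifies them via explicit maximal tori $T_i$ and writes $N_G(T_i)/T_i$ as $2_+^{1+4}.(S_3\times S_3)$, $U_3(2)$, $U_2(3)$, $\mathrm{SL}_2(3)\times 3$; these are the same groups with the same conjugacy-class counts.

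Where your write-up is looser than the paper's is the extendibility step for the secondary blocks. You list as possibilities that $W_{\mathbf{G}^F}(\mathbf{L},\zeta)$ is cyclic, isomorphic to $S_3$, or of order~$4$, and then defer the order-$4$ case to ``the same Clifford-theoretic considerations once its isomorphism type is fixed.'' In fact $S_3$ does not occur here (it occurs in Section~3, the case $l\mid q-\varepsilon$, which is excluded), and the order-$4$ group in case~(3) is cyclic. The paper's argument is sharper: it observes directly from \cite[Table~1]{gen} that for $e\geq 2$ every non-principal unipotent $e$-cuspidal pair has cyclic relative Weyl group, and for the remaining case $e=1$, $\varepsilon=-1$ (your case~(1) with $\varepsilon=-1$) it computes $W_{\mathbf{G}^F}(\mathbf{L},\zeta)\cong 2$ for $\mathbf{L}^F=\mathrm{GU}(6,q)$ via Lemma~\ref{NL}. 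Once cyclicity is established, extendibility is immediate and no Clifford-theoretic case analysis is needed. Tightening this point would bring your argument into line with the paper's.
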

\begin{proof}
By \cite[Table 2]{tori}, $T_1=(q+\varepsilon)^2\times (q^2-1)^2$, $T_2=(q^2+\varepsilon q+1)^3$, $T_3=((q^2+1)(q-\varepsilon))^2$ and $T_4=(q^2-\varepsilon q+1)\times(q^4+q^2+1)$ are maximal tori of $G$. Moreover, $N_G(T_1)/T_1\cong 2_{+}^{1+4}.(S_3\times S_3)$ is the Weyl group of $F_4(q)$, $N_G(T_2)/T_2\cong U_3(2)$, $N_G(T_3)/T_3\cong U_2(3)$ and $N_G(T_4)/T_4\cong \mathrm{SL}_2(3)\times 3$ by \cite{NGT}. By \cite{f4}, $N_G(T_1)/T_1$ has 25 irreducible characters. Note that $N_G(T_2)/T_2,$ $N_G(T_3)/T_3$, $N_G(T_4)/T_4$ has 24, 16, 21 irreducible characters respectively. Thus (1)-(4) follow from \cite[Table 1]{gen} and  \cite[Appendix]{lusztig} as in the proof of Proposition \ref{blo}.

Let $(\mathbf{L},\zeta)$ be a unipotent $e$-cuspidal pair of $G$ with $\zeta\neq 1$. If $e\geq 2$, then $W_{\mathbf{G}^F}(\mathbf{L},\zeta)$ is cyclic by \cite[Table 1]{gen}. Otherwise we have $e=1$ and $\varepsilon=-1$. We see that $\mathbf{L}^F=\mathrm{GU}(6,q)$ and $\zeta$ is uniquely determined by \cite[\S 13.9]{Carter}. Hence $W_{\mathbf{G}^F}(\mathbf{L},\zeta)=N_{\mathbf{G}^F}(\mathbf{L})/\mathbf{L}^F\cong 2$ by Lemma \ref{NL} and \cite{levie}. In particular, the assumption of Theorem \ref{malle} always holds, so the result follows.
\end{proof}

\begin{proof}[Proof of Theorem 1.2 when $l\nmid q-\varepsilon$:]
We may assume the defect group of $B$ is not cyclic. Thus $l\mid q+1$ or $l\mid q^2+1$ or $l\mid q^2\pm q+1$ and $B$ is the principal block. By Proposition \ref{nmid} and Theorem \ref{basicset}, we have
$$
|\mathcal{W}(B)|= |\mathrm{IBr}(B)|.
$$
We denote $\bar{G}=G/Z(G)$ and $A=\mathrm{Aut}(G)$ as before. An argument as the previous section shows every irreducible Brauer character of $B$ is $\mathrm{Aut}(G)$-invariant and is extensible to $A$. Suppose $B$ is labeled by $(\mathbf{T},1)$ for some maximal torus $\mathbf{T}$ of $\mathbf{G}$, then $R=\mathbf{T}^F_l$. Take $\varphi'\in\mathrm{dz}(N_{\bar{G}}(\bar{R})/\bar{R})$. Then $\mathrm{ker}(\varphi')\geq C_{\bar{G}}(\bar{R})$. Note that $N_G(R)/C_G(R)=N_G(\mathbf{T}^F)/\mathbf{T}^F$ is a subgroup of $N_G(\mathbf{T}_0^F)/\mathbf{T}_0^F=\mathrm{PSp}_4(3).2$, where $\mathbf{T}_0^F=(q-\varepsilon)^6$. We have
$$
N_A(\bar{R})/C_{\bar{G}}(\bar{R})\cong N_G(R)/C_G(R)\times\mathrm{Out}(G).
$$
Hence all weights in the principal block are $\mathrm{Aut}(G)$-invariant up to conjugation and $\varphi'$ can be extended to $N_A(\bar{R})$. For (d), the proof for the case $l\mid q-\varepsilon$ still applies here.
\end{proof}

\bibliographystyle{plain}

\end{document}